 \newtheorem{theorem}{Theorem}[section]
 \newtheorem{proposition}[theorem]{Proposition}
\newtheorem{lemma}[theorem]{Lemma}
\newtheorem{remark}[theorem]{Remark}
\begin{document}

\begin{frontmatter}




\title{Positive ground state solutions for the critical Klein-Gordon-Maxwell system with potentials}


 \author{Paulo C. Carri\~ao }
 \ead{carrion@mat.ufmg.br}
 \address{Departamento Matem\'atica, UFMG, Belo Horizonte-MG, Brasil}

\author{Patr\'{i}cia L. Cunha\fnref{Cun} }
 \ead{patcunha80@gmail.com}
 \fntext[Cun]{Supported by CAPES/Brazil}
 \address{Departamento Matem\'atica, UFSCar, S\~{a}o Carlos-SP, Brasil}

\author{Ol\'{i}mpio H. Miyagaki\fnref{Mi} }
 \ead{ohmiyagaki@gmail.com}
 \fntext[Mi]{Supported in part by CNPq/Brazil and INCTMat-CNPq/Brazil}
 \address{Departamento Matem\'atica, UFJF, Juiz de Fora-MG, Brasil}

\begin{abstract}
This paper deals with the Klein-Gordon-Maxwell system when the nonlinearity exhibits critical growth. We prove the existence of positive ground
state solutions for this system when a periodic potential $V$ is introduced. The method combines the minimization of the corresponding Euler-Lagrange
functional on the Nehari manifold with the Brézis and Nirenberg technique.
\end{abstract}

\begin{keyword}
Variational methods \sep ground state solutions \sep critical growth.

\MSC 35J47 \sep 35J50 \sep 35B33


\end{keyword}

\end{frontmatter}



\section{Introduction}

In this paper we consider the Klein-Gordon-Maxwell system
\begin{equation*}\label{kgm} \left \{ \begin{array}{ll}
 -\Delta u+ V(x)u-(2\omega +\phi)\phi u=\mu |u|^{q-2}u+|u|^{2^*-2}u &  \mbox{in}\quad \mathbb{R}^{3}\\
\hspace{0.24cm} \Delta \phi=(\omega+\phi)u^{2} & \mbox{in}\quad \mathbb{R}^{3}
\end{array}\right. \tag{$\mathcal{KGM}$}
\end{equation*}
where $\mu$ and $\omega$ are positive real constants, $2<q<2^*=6$ and $u,\phi:\mathbb{R}^{3}\rightarrow \mathbb{R}$. Moreover we assume the
following hypothesis on the continuous function $V$:
\begin{itemize}
\item[(V1)] $ V(x+p)=V(x), \quad x\in\mathbb{R}^3,\, p\in\mathbb{Z}^3$
\item[(V2)]\label{V2}
There exists $V_0>0$ such that $V(x)\geq V_0>0$, $ x\in\mathbb{R}^3$, \newline where $V_0>\frac{2(4-q)}{q-2}\omega^2$ if $2<q<4$.
\end{itemize}

This system appears as a model which describes the nonlinear Klein-Gordon field interacting with the electromagnetic field in the electrostatic case.
The unknowns of the system are the field $u$ associated to the particle and the electric
potential $\phi$. The presence of the nonlinear term simulates the interaction between many particles or external nonlinear perturbations.

Let us recall some previous results that led us to the present research.

The first result is due to Benci and Fortunato. In \cite{Benci-Fortunato-2002}, they proved the existence of infinitely many radially symmetric
solutions for the Klein-Gordon-Maxwell system
\begin{equation}\label{problem 1} \left \{ \begin{array}{ll}
 -\Delta u+ [m_0^2-(\omega +\phi)^2] u=|u|^{q-2}u &  \mbox{in}\quad \mathbb{R}^{3}\\
\hspace{0.24cm} \Delta \phi=(\omega+\phi)u^{2} & \mbox{in}\quad \mathbb{R}^{3}
\end{array}\right.
\end{equation}
considering subcritical behavior on the nonlinearity under the assumptions $|m_0|>|\omega|$ and $4<q<6$. In \cite{D'Aprile-Mugnai}, D'Aprile and Mugnai
covered the case $2 <q< 4$ assuming $m_0\sqrt{p-2}>\sqrt{2}\omega>0$ and the case $q=4$ assuming $m_0>\omega>0$.

Motivated by the approach of Benci and Fortunato, Cassani \cite{Cassani} considered system (\ref{problem 1}) for the critical case by adding a lower
order perturbation:
\begin{equation}\label{problem 2} \left \{ \begin{array}{ll}
 -\Delta u+ [m_0^2-(\omega +\phi)^2] u=\mu|u|^{q-2}u+|u|^{2^*-2}u &  \mbox{in}\quad \mathbb{R}^{3}\\
\hspace{0.24cm} \Delta \phi=(\omega+\phi)u^{2} & \mbox{in}\quad \mathbb{R}^{3}
\end{array}\right.
\end{equation}
where $\mu>0$. He was able to show that
\begin{itemize}
 \item [i)] if $|m_{0}|>|\omega|$ and $4<q<2^{*}$, then for each $\mu>0$, there exists a radially symmetric
 solution for system (\ref{problem 2});
 \item [ii)] if $|m_{0}|>|\omega|$ and $q=4$, then system (\ref{problem 2}) has a radially
 symmetric solution provided that $\mu $ is sufficiently large.
\end{itemize}

The class of (\ref{kgm}) system presented in this paper with such potential $V(x)$ is closely related to a number of several other works. In fact,
the potential $V(x)$ also satisfies the constant case $m_0^2-\omega^2$ which has been extensively considered, see e.g.
\cite{Azzollini-Pisani-Pomponio,Azzollini-Pomponio-KGM,Benci-Fortunato-2002,Cassani,D'Aprile-Mugnai,D'Aprile-Mugnai-nonexistence}.

In \cite{Georgiev-Visciglia}, Georgiev and Visciglia also introduced a class of (\ref{kgm}) system with potentials, however they considered a small
external Coulomb potential in the corresponding Lagrangian density.

We observe that without loss of generality we may assume $\omega>0$, because if $(u,\phi)$ is a solution of the (\ref{kgm}) system, then $(u,-\phi)$
will be a solution corresponding to $-\omega$. Therefore, the sign of $\omega$ is not essential when looking for existence of solutions.

The investigation of \textit{ground state} solutions, that is, couples $(u,\phi)$ which solve (\ref{kgm}) and minimize the action functional
associated to (\ref{kgm}) among all possible nontrivial solutions, has been considered by many authors in a plethora of problems. See, for example,
\cite{Azzollini-Pomponio-KGM,Azzollini-Pomponio-SM,Berestycki-Lions,Li-Wang-Zeng,Zhao-Zhao}.

The authors Azzollini and Pomponio \cite{Azzollini-Pomponio-KGM} established the existence of ground state solutions for the subcritical
Klein-Gordon-Maxwell system (\ref{problem 1}), under the following assumptions:
\begin{itemize}
 \item [i)] $4\leq q<6$ and $m_0>\omega$;
 \item [ii)] $2<q<4$ and $m_0\sqrt{q-2}>\omega\sqrt{6-q}$.
\end{itemize}
Their technique consisted in minimizing the corresponding functional of (\ref{problem 1}) on the Nehari manifold. 

In the present paper we go one step further and extend Theorem 1.1 in \cite{Azzollini-Pomponio-KGM} for the critical growth case. Moreover, we  
establish the sign of the solution.

Our main result is as follows:

\begin{theorem}\label{teorema}
If conditions (V1) and (V2) hold, then the (\ref{kgm}) system has a positive ground state solution for each $\mu>0$ if $4<q<6$ and for
$\mu $ sufficiently large if $2<q\leq 4$.
\end{theorem}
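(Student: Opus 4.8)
The plan is to use the classical Benci--Fortunato reduction to recast $(\mathcal{KGM})$ as a single variational equation, then to minimize the reduced functional over the Nehari manifold, recovering compactness through a Br\'ezis--Nirenberg type estimate of the minimax level. First I would fix the Hilbert space $H=H^1(\mathbb{R}^3)$ with the norm $\|u\|^2=\int_{\mathbb{R}^3}(|\nabla u|^2+V(x)u^2)\,dx$, which by (V1)--(V2) is equivalent to the standard one since $V$ is continuous, periodic and bounded below by $V_0>0$. For each $u\in H$ the second equation is \emph{linear} in $\phi$, namely $-\Delta\phi+u^2\phi=-\omega u^2$, so the Lax--Milgram theorem produces a unique $\phi_u\in D^{1,2}(\mathbb{R}^3)$, and a maximum-principle argument gives the pointwise bounds $-\omega\le\phi_u\le 0$; the map $u\mapsto\phi_u$ is of class $C^1$ and, crucially, $\mathbb{Z}^3$-translation equivariant. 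Substituting $\phi_u$ back yields the $C^1$ reduced functional
\[
I(u)=\frac12\|u\|^2-\frac{\omega}{2}\int_{\mathbb{R}^3}\phi_u u^2\,dx-\frac{\mu}{q}\int_{\mathbb{R}^3}|u|^q\,dx-\frac1{2^*}\int_{\mathbb{R}^3}|u|^{2^*}\,dx,
\]
whose critical points are exactly the solutions $(u,\phi_u)$ of $(\mathcal{KGM})$; note that the coupling term is nonnegative because $\phi_u\le 0$.

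Next I would introduce the Nehari manifold $\mathcal{N}=\{u\in H\setminus\{0\}:\langle I'(u),u\rangle=0\}$ and set $c=\inf_{\mathcal{N}}I$. Because $\phi_{tu}$ is not homogeneous in $t$, the fibering analysis of $t\mapsto I(tu)$ is more delicate than in the pure power case; here hypothesis (V2), and in particular the requirement $V_0>\frac{2(4-q)}{q-2}\omega^2$ when $2<q<4$, is exactly what guarantees that each ray meets $\mathcal{N}$ at a single point which is a strict maximum. Consequently $\mathcal{N}$ is a natural $C^1$ constraint, $c>0$ coincides with the mountain-pass level of $I$, and minimizing (equivalently, Palais--Smale) sequences are bounded in $H$.

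The heart of the matter, and the step I expect to be the main obstacle, is recovering compactness against the double loss caused by working at the critical exponent on the unbounded domain $\mathbb{R}^3$. The periodicity of $V$ lets me translate a bounded Palais--Smale sequence so that, by a concentration--compactness argument, it either converges to a nontrivial critical point or loses mass through a single critical Sobolev bubble of energy $\frac13 S^{3/2}$, where $S$ is the best constant in $D^{1,2}(\mathbb{R}^3)\hookrightarrow L^{2^*}$. To exclude the bubble I must prove the strict estimate $c<\frac13 S^{3/2}$. I would do this by inserting truncated Aubin--Talenti instantons $u_\epsilon$ into $\max_{t>0}I(tu_\epsilon)$ and estimating each term as $\epsilon\to 0$: the critical part contributes $\frac13 S^{3/2}+O(\epsilon)$, the potential term $\frac12\int V u_\epsilon^2\sim C\epsilon$ raises the level, the nonlocal coupling term is of lower order, while the subcritical term lowers it, by a quantity that for $q\ge 4$ is of order $\mu\,\epsilon^{\,3-q/2}$. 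When $4<q<6$ one has $3-q/2<1$, so this gain beats the $O(\epsilon)$ loss for \emph{every} $\mu>0$ as $\epsilon\to 0$; when $q=4$ the two are of the same order $\epsilon$, so one needs $\mu$ large, and for $2<q<4$ a large $\mu$ together with an optimized (non-infinitesimal) $\epsilon$ is required. The delicate dimension-three truncation estimates and the control of the coupling term are where the real work lies.

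Finally, granted $c<\frac13 S^{3/2}$, the Palais--Smale sequence is relatively compact up to $\mathbb{Z}^3$-translation and its limit $u\in\mathcal{N}$ realizes $c$, hence is a ground state. For positivity I would use that $\phi_{|u|}=\phi_u$ and that $I(|u|)=I(u)$, so $|u|$ is also a minimizer; since $-\omega\le\phi_u\le0$ forces $-(2\omega+\phi_u)\phi_u\ge0$, the reduced equation for $u\ge0$ reads $-\Delta u+a(x)u=\mu u^{q-1}+u^{2^*-1}$ with $a(x)=V(x)-(2\omega+\phi_u)\phi_u\ge V_0>0$, and the strong maximum principle then yields $u>0$, completing the proof.
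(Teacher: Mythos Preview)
Your plan is essentially the paper's own: Benci--Fortunato reduction with the bounds $-\omega\le\phi_u\le0$, minimization on the Nehari manifold, a Br\'ezis--Nirenberg level estimate $c<\tfrac13 S^{3/2}$ via truncated Talenti instantons (with the case split $4<q<6$ versus $2<q\le4$ requiring large $\mu$), recovery of compactness through Lions' concentration lemma and $\mathbb{Z}^3$-translation invariance, and positivity via $|u|$ and the strong maximum principle.

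One technical point deserves care. You assert that (V2) guarantees each ray $t\mapsto tu$ meets $\mathcal{N}$ in a \emph{single} strict maximum, and deduce from this that $\mathcal{N}$ is a natural constraint and that the Nehari infimum equals the mountain-pass level. Because $t\mapsto\phi_{tu}$ is genuinely nonlinear (it is \emph{not} $t^2\phi_u$), the fibering map here is not of pure-power type, and uniqueness of its critical point is not obvious; the paper does not claim or prove it. Instead the paper bypasses fibering uniqueness by showing directly that $\langle G'(u),u\rangle\le -C<0$ for every $u\in\mathcal{N}$, using (V2) together with an auxiliary identity for $\psi_u=\tfrac12\Phi'(u)[u]$ (namely $\int\omega\psi_u u^2=\int(\omega+\phi_u)\phi_u u^2$, so $\psi_u\le0$). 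This is enough to make $\mathcal{N}$ a $C^1$ natural constraint and to force the Lagrange multipliers along the Ekeland minimizing sequence to vanish, which is all that is needed. If you keep your fibering-uniqueness claim you will have to supply a separate argument for it; otherwise, replacing it by the paper's $\langle G'(u),u\rangle<0$ computation closes the gap without affecting the rest of your outline.
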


Our approach combines the minimization of the corresponding functional of (\ref{kgm}) system on the Nehari manifold with the Brézis and Nirenberg
technique.


\section{Variational setting}

In this section we introduce notations and prove some preliminary results concerning the variational structure for the (\ref{kgm}) system.

Throughout this paper, $C$ and $C_i$ are positive constants which may change from line to line.

Let us consider the Sobolev space $E$ endowed with the norm
\begin{eqnarray*} \|u\|^{2}=\int_{\mathbb{R}^{3}}(|\nabla u|^2+V(x)u^2)\,dx
\end{eqnarray*}
which is equivalent to the usual Sobolev norm on
$H^1(\mathbb{R}^3)$. Also $\mathcal{D}^{1,2}\equiv
\mathcal{D}^{1,2}(\mathbb{R}^{3})$ represents the completion of
$\mathcal{C}_{0}^{\infty}(\mathbb{R}^{3})$ with respect to the
norm
\begin{eqnarray*} \|u\|_{\mathcal{D}^{1,2}}^{2}=\int_{\mathbb{R}^{3}}|\nabla u|^2\,dx.
\end{eqnarray*}

For any $1\leq s<\infty$, $L^s(\mathbb{R}^3)$ is the usual Lebesgue space endowed with the norm
\begin{eqnarray*}\|u\|_s^s= \int_{\mathbb{R}^3}|u|^s \,dx.
\end{eqnarray*}

Due to the variational nature of the (\ref{kgm}) system, its weak solutions $(u,\phi)\in E\times\mathcal{D}^{1,2}$ are critical points of the
functional $F:E\times \mathcal{D}^{1,2} \rightarrow\mathbb{R}$ defined as
\begin{eqnarray}\label{F} && F(u,\phi) =\frac{1}{2}\int_{\mathbb{R}^{3}}
\Big(|\nabla u|^2-|\nabla \phi|^2+[V(x)-(2\omega+\phi)\phi ]u^2\Big)\,dx-\frac{\mu}{q}\int_{\mathbb{R}^{3}}{|u|^q}\,
 dx-\frac{1}{6}\int_{\mathbb{R}^{3}}{|u|^{6}} \,dx,
\end{eqnarray}
By standard arguments the function $F$ is $C^{1}$ on $E\times
\mathcal{D}^{1,2}$.

In order to avoid the difficulty originated by the strongly indefiniteness of the functional $F$ we apply a \textit{reduction method}, as it has been
done by the aforementioned authors.

\begin{proposition}\label{Propriedade-phi}
For every $u \in E$, there exists a unique $\phi=\phi_u\in \mathcal{D}^{1,2}$ which solves $\Delta \phi=(\omega+\phi)u^{2}$.
Furthermore, in the set $\{x:\,u(x)\neq 0\}$ we have $-\omega\leq\phi_u\leq 0$ for $\omega>0$.
\end{proposition}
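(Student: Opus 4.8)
The plan is to treat the second equation, for a fixed $u\in E$, as a linear elliptic problem for $\phi$ and to solve it by Lax--Milgram (equivalently, by minimizing a strictly convex coercive functional on $\mathcal{D}^{1,2}$), and then to obtain the pointwise bounds through a truncation argument in the spirit of the weak maximum principle.

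First I would record the weak formulation of $\Delta\phi=(\omega+\phi)u^2$: for $\psi\in\mathcal{D}^{1,2}$,
\[
a(\phi,\psi):=\int_{\mathbb{R}^3}\nabla\phi\cdot\nabla\psi\,dx+\int_{\mathbb{R}^3}u^2\phi\psi\,dx=-\omega\int_{\mathbb{R}^3}u^2\psi\,dx=:\ell(\psi).
\]
The bilinear form $a$ is coercive, since $a(\phi,\phi)\ge\|\phi\|_{\mathcal{D}^{1,2}}^2$, and continuous: by H\"older and the embedding $\mathcal{D}^{1,2}\hookrightarrow L^6$ one has $|\int u^2\phi\psi|\le\|u\|_3^2\|\phi\|_6\|\psi\|_6\le C\|\phi\|_{\mathcal{D}^{1,2}}\|\psi\|_{\mathcal{D}^{1,2}}$, which is finite because $E\hookrightarrow H^1(\mathbb{R}^3)\hookrightarrow L^3$. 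Likewise $\ell$ is bounded, $|\ell(\psi)|\le\omega\|u\|_{12/5}^2\|\psi\|_6\le C\|\psi\|_{\mathcal{D}^{1,2}}$. The Lax--Milgram theorem then produces a unique $\phi_u\in\mathcal{D}^{1,2}$ with $a(\phi_u,\psi)=\ell(\psi)$ for all $\psi$, i.e. the desired weak solution, and uniqueness comes for free.

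For the upper bound I would test the weak equation with $\psi=\phi_u^{+}\in\mathcal{D}^{1,2}$, obtaining
\[
\int_{\mathbb{R}^3}|\nabla\phi_u^{+}|^2\,dx+\int_{\mathbb{R}^3}u^2(\phi_u^{+})^2\,dx+\omega\int_{\mathbb{R}^3}u^2\phi_u^{+}\,dx=0,
\]
a sum of three nonnegative terms (here $\omega>0$); hence $\int|\nabla\phi_u^{+}|^2=0$, so $\phi_u^{+}\equiv0$ and $\phi_u\le0$. For the lower bound I would test with $\psi=(\phi_u+\omega)^{-}=\max\{0,-(\phi_u+\omega)\}$. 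On its support $\phi_u+\omega<0$, so $(\omega+\phi_u)u^2\psi=-u^2\psi^2$ and $\nabla\psi=-\nabla\phi_u$, which reduces the weak identity to $\int|\nabla\psi|^2+\int u^2\psi^2=0$. This forces $\psi\equiv0$, giving $\phi_u\ge-\omega$; in particular the bound holds on $\{u\neq0\}$, where the term $\int u^2\psi^2=0$ already yields it directly.

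The step requiring the most care, and the main obstacle, is the admissibility of the truncated test functions. One must check that $\phi_u^{+}$ and $(\phi_u+\omega)^{-}$ genuinely lie in $\mathcal{D}^{1,2}$: the former is standard, while for the latter one uses that it is supported where $\phi_u$ is bounded away from $0$ and is dominated there by $|\phi_u|$, so that it inherits membership in $L^6$ and its gradient (equal to $-\nabla\phi_u$ on that set) inherits square-integrability from $\phi_u\in\mathcal{D}^{1,2}$. One then invokes the chain rule for positive and negative parts so that $\nabla\phi_u\cdot\nabla\psi$ localizes correctly on the relevant level sets. Once admissibility is settled, all the integrals above are finite precisely because $u\in E$ embeds into the Lebesgue spaces used, and the sign computations close the argument.
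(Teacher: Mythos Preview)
Your proof is correct and follows essentially the same approach as the paper's own proof: existence and uniqueness via Lax--Milgram, and the two pointwise bounds via testing the weak formulation with the truncations $(\omega+\phi_u)^{-}$ and $\phi_u^{+}$. The paper merely cites D'Aprile--Mugnai and Cassani (invoking ``Stampacchia's lemma'') for these truncation arguments, while you spell them out explicitly and even verify admissibility of the test functions; the underlying ideas are the same.
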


\begin{proof} The existence and uniqueness follows from the Lax-Milgram theorem. Using the ideas of \cite{D'Aprile-Mugnai}, fix $u\in E$ and 
consider $\omega >0$. If we multiply both members of $\Delta \phi_u=(\omega+\phi_u)u^{2}$
 by $(\omega+\phi_u)^{-}=\text{min}\{\omega+\phi_u,0\}$, which is an admissible test function, we get
\begin{equation*}
-\int\limits_{\{x|\omega+\phi_u<0\}}{|\nabla\phi_u|^{2}}-\int\limits_{\{x|\omega+\phi_u<0\}}
{(\omega+\phi_u)^{2} u^{2}}= 0
\end{equation*}
so that $\phi_u\geq -\omega$ where $u\neq 0$. 

Finally, by the  Stampacchia's lemma, observe that if $\omega>0$, then $\phi\leq 0$ (for details see \cite{Cassani}).
\end{proof}

According to Proposition \ref{Propriedade-phi}, we can define
\begin{eqnarray*} \Phi :E\rightarrow \mathcal{D}^{1,2}
\end{eqnarray*}
which is of class $C^1$ and maps each $u\in E$ in the unique solution of $\Delta \phi=(\omega+\phi)u^{2}$. 

From the definition of 
$\Phi$ we have
\begin{eqnarray}\label{F'_phi}
F'_{\phi}(u,\phi_u)=0, \quad\forall u\in E.
\end{eqnarray}

Now let us consider the functional
\begin{eqnarray}\label{I(u)=F(u,phi)}
 I: E\rightarrow \mathbb{R}, \quad I(u) := F(u,\phi_u),
 \end{eqnarray}
then $I\in C^1(E,\mathbb{R})$ and, by (\ref{F'_phi}), 
\begin{eqnarray*}I'(u)=F_u'(u,\phi_u).
\end{eqnarray*}

Multiplying both members of the second equation in the ($\mathcal{KGM}$) system by $\phi_u$ and integrating by parts, we obtain
\begin{eqnarray}\label{system2-modificado2}
\int_{\mathbb{R}^{3}} {| \nabla\phi_u|^{2}}\,dx = -\int_{\mathbb{R}^{3}}{\omega\phi_u} u^{2}\,dx-\int_{\mathbb{R}^{3}}{\phi_u^{2}u^{2}}\,dx.
\end{eqnarray}

\begin{remark}\label{remark-phi-u} Let us note that
\begin{eqnarray*}
\|\phi_u\|_{\mathcal{D}^{1,2}}^2 \leq \omega\int_{\mathbb{R}^3}|\phi_u| u^2 \,dx \leq \omega\|\phi_u\|_6 \|u\|_{12/5}^2
\end{eqnarray*}
then
\begin{eqnarray*}\|\phi_u\|_{\mathcal{D}^{1,2}}\leq C\omega
\|u\|_{12/5}^2.
\end{eqnarray*}
\end{remark}

By the definition of $F$ and using (\ref{system2-modificado2}), the functional $I$ may be written as
\begin{eqnarray} \label{I}
 I(u)=\frac{1}{2}\int_{\mathbb{R}^{3}}\Big({|\nabla u|^{2}+ V(x) u^{2}} -\omega\phi_u u^{2}\Big)\,dx
-\frac{\mu}{q}\int_{\mathbb{R}^{3}}|u|^{q}\,dx -\frac{1}{6}\int_{\mathbb{R}^{3}}{u^{6}}\,dx
\end{eqnarray}
while for $I'$ we have,
\begin{eqnarray}\label{I'}
\langle I'(u),v\rangle = \int_{\mathbb{R}^{3}}\Big(\nabla u\cdot
\nabla v+ V(x)uv -(2\omega+\phi_u)\phi_u uv -\mu {|u|^{q-2}}uv-
u^{5}v \Big)\,dx.
\end{eqnarray}
for every $u,v\in E$. Then, $(u,\phi)\in E\times \mathcal{D}^{1,2}$ is a weak solution of $(\mathcal{KGM})$ if, and only if, $\phi=\phi_u$ and
$u\in E$ is a critical point of $I$. The functional $I$ obtained is not strongly indefinite anymore and we will look for its critical points.


\section{Proof of Theorem \ref{teorema}}

The purpose of this section is to obtain critical points of the functional $I$, then we shall consider the correspondent Nehari manifold
\begin{eqnarray}\label{Nehari}\mathcal{N}=\{u\in E\setminus \{0\}\hspace{.1cm}|\hspace{.1cm} G(u)=0\},
\end{eqnarray}
where
\begin{eqnarray*}G(u)&=&\langle I'(u), u\rangle \\
&=&\int_{\mathbb{R}^{3}}( |\nabla u|^2+V(x) u^2)\,dx -\int_{\mathbb{R}^{3}}(2\omega+\phi_u)\phi_u u^2\,dx
-\mu\int_{\mathbb{R}^{3}} |u|^q\,dx -\int_{\mathbb{R}^{3}} u^{6}\,dx.
\end{eqnarray*}

The next lemma will be useful when proving that $\mathcal{N}$ is a Nehari manifold of $C^1$ class:

\begin{lemma}\label{2.2a} Let $u\in E$ and $2\psi_u=\Phi'(u)\in \mathcal{D}^{1,2}(\mathbb{R}^3)$.
Then, $\psi_u$ is a solution of the integral equation
\begin{eqnarray*}\int_{\mathbb{R}^3}\omega\psi_uu^2\,dx=\int_{\mathbb{R}^3}(\omega+\phi_u)\phi_u u^2\,dx
\end{eqnarray*}
and, as a consequence, $\psi_u\leq 0$.
\end{lemma}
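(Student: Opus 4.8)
The plan is to first identify the elliptic equation solved by $\psi_u$, then derive the integral identity from the symmetry of the Dirichlet form, and finally extract the sign via a maximum-principle argument. Here $\Phi'(u)$ is understood as the directional derivative $\Phi'(u)[u]$ (the factor $2$ confirms this reading, since it arises from differentiating the quadratic term $u^2$).

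Since $\Phi$ is $C^1$ and $\phi_u=\Phi(u)$ solves $\Delta\phi_u=(\omega+\phi_u)u^2$, I would differentiate this relation along the direction $u$. Applying $\frac{d}{dt}\big|_{t=0}$ to $\Delta\Phi(u+tu)=(\omega+\Phi(u+tu))(u+tu)^2$ and writing $2\psi_u=\Phi'(u)[u]$, the chain rule gives $\Delta(2\psi_u)=2\psi_u\,u^2+2(\omega+\phi_u)u^2$, that is
\[
\Delta\psi_u=(\psi_u+\omega+\phi_u)u^2,\qquad\text{equivalently}\qquad \Delta\psi_u-u^2\psi_u=(\omega+\phi_u)u^2 .
\]
All the products appearing here lie in $L^1$ by H\"older combined with the embeddings $E\hookrightarrow L^s$ and $\mathcal{D}^{1,2}\hookrightarrow L^6$, so the weak formulation is well posed and the computation is legitimate.

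To obtain the integral identity I would exploit the symmetry of the bilinear form $(\zeta,\eta)\mapsto\int\nabla\zeta\cdot\nabla\eta$. Testing the equation for $\phi_u$ against $\psi_u$ yields $-\int\nabla\phi_u\cdot\nabla\psi_u=\int(\omega+\phi_u)u^2\psi_u$, while testing the equation for $\psi_u$ against $\phi_u$ yields $-\int\nabla\psi_u\cdot\nabla\phi_u=\int(\psi_u+\omega+\phi_u)u^2\phi_u$. The left-hand sides coincide, so equating the right-hand sides and cancelling the common term $\int\phi_u\psi_u u^2$ produces exactly $\int\omega\psi_u u^2=\int(\omega+\phi_u)\phi_u u^2$.

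For the sign, note that the identity alone only gives $\int\psi_u u^2\le 0$ (its right-hand side is $\le 0$ because $-\omega\le\phi_u\le 0$ by Proposition \ref{Propriedade-phi}), so to reach the pointwise conclusion I would return to the PDE. The key input is again $\omega+\phi_u\ge 0$, which makes the right-hand side $(\omega+\phi_u)u^2$ nonnegative. Testing $\Delta\psi_u-u^2\psi_u=(\omega+\phi_u)u^2$ with the admissible function $\psi_u^+=\max\{\psi_u,0\}\in\mathcal{D}^{1,2}$ gives
\[
-\int|\nabla\psi_u^+|^2-\int u^2(\psi_u^+)^2=\int(\omega+\phi_u)u^2\psi_u^+\ge 0 .
\]
Since the left-hand side is $\le 0$, both sides vanish; hence $\nabla\psi_u^+=0$ a.e., and as $\psi_u^+\in\mathcal{D}^{1,2}(\mathbb{R}^3)$ the resulting constant must be zero, so $\psi_u^+\equiv 0$, i.e. $\psi_u\le 0$. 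Equivalently, this is the Stampacchia maximum principle already invoked in Proposition \ref{Propriedade-phi}. The main obstacle is the differentiation step: one must justify that $t\mapsto\Phi(u+tu)$ is differentiable with derivative solving the linearized equation, and that $\psi_u^+$ is a valid test function; both reduce to the $C^1$ regularity of $\Phi$ and routine H\"older/Sobolev bounds, so no genuinely new difficulty arises. The only point requiring care is to distinguish the pointwise sign, which comes from the maximum principle, from the weaker integral inequality furnished by the identity itself.
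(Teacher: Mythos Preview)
Your proof is correct and follows essentially the same approach as the paper: the paper's argument consists solely of recording that $\psi_u$ satisfies $\Delta\psi_u-u^2\psi_u=(\omega+\phi_u)u^2$ (citing D'Aprile--Mugnai) and stating that the lemma follows, while you spell out both the derivation of this linearized equation and the two subsequent steps (the symmetric testing against $\phi_u$ for the integral identity, and the Stampacchia-type testing against $\psi_u^+$ for the sign). Your reading of $2\psi_u=\Phi'(u)[u]$ is the intended one, and your observation that $(\omega+\phi_u)u^2\ge 0$ only needs to hold on $\{u\neq 0\}$---which is exactly what Proposition~\ref{Propriedade-phi} gives---is the right way to close the maximum-principle step.
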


\begin{proof} The proof follows from the fact that $\psi_u$ satisfies
$$\Delta\psi_u-u^2\psi_u=(\omega+\phi_u)u^2,
$$
as we know by \cite{D'Aprile-Mugnai-nonexistence}.

\end{proof}

Now we need some results concerning the Nehari manifold.

\begin{lemma}\label{norma(u)>=C} There exists a constant $C>0$ such that $\|u\|\geq C$, for all $u\in \mathcal{N}$.
\end{lemma}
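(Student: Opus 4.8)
The plan is to exploit the defining identity $G(u)=0$ on $\mathcal{N}$ together with the sign information on $\phi_u$ furnished by Proposition~\ref{Propriedade-phi} and the Sobolev embeddings. First I would rewrite the constraint $G(u)=0$ in the form
\begin{equation*}
\|u\|^2 = \int_{\mathbb{R}^{3}}(2\omega+\phi_u)\phi_u u^2\,dx + \mu\int_{\mathbb{R}^{3}}|u|^q\,dx + \int_{\mathbb{R}^{3}}u^6\,dx,
\end{equation*}
isolating $\|u\|^2 = \int_{\mathbb{R}^3}(|\nabla u|^2+V(x)u^2)\,dx$ on the left.

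The key observation is that the nonlocal term is nonpositive. By Proposition~\ref{Propriedade-phi} we have $-\omega\leq\phi_u\leq 0$ on $\{x:\,u(x)\neq 0\}$, so that $2\omega+\phi_u\geq\omega>0$ while $\phi_u\leq 0$; hence $(2\omega+\phi_u)\phi_u\leq 0$ pointwise and therefore $\int_{\mathbb{R}^3}(2\omega+\phi_u)\phi_u u^2\,dx\leq 0$. Discarding this term yields
\begin{equation*}
\|u\|^2 \leq \mu\|u\|_q^q + \|u\|_6^6.
\end{equation*}

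Next, since $2<q<6$, the continuous embeddings $E\hookrightarrow L^q(\mathbb{R}^3)$ and $E\hookrightarrow L^6(\mathbb{R}^3)$ provide constants $C_q,C_6>0$ with $\|u\|_q\leq C_q\|u\|$ and $\|u\|_6\leq C_6\|u\|$, so
\begin{equation*}
\|u\|^2 \leq \mu C_q^q\|u\|^q + C_6^6\|u\|^6.
\end{equation*}
Since $u\neq 0$, dividing by $\|u\|^2>0$ gives $1\leq \mu C_q^q\|u\|^{q-2} + C_6^6\|u\|^{4}$. Both exponents $q-2$ and $4$ are strictly positive, so the right-hand side tends to $0$ as $\|u\|\to 0$; the inequality therefore cannot hold for arbitrarily small $\|u\|$. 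Concretely, the continuous increasing function $t\mapsto \mu C_q^q t^{q-2}+C_6^6 t^4$ vanishes at $t=0$ and equals $1$ at a unique $t_0>0$, and by monotonicity the inequality forces $\|u\|\geq t_0=:C$, which is the desired bound.

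The argument is essentially routine and requires no compactness at this stage; the only point needing care is the control of the nonlocal quantity $\int_{\mathbb{R}^3}(2\omega+\phi_u)\phi_u u^2\,dx$, and this is handled cleanly by the pointwise estimates on $\phi_u$ already established in Proposition~\ref{Propriedade-phi}.
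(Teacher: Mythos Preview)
Your proof is correct and follows essentially the same approach as the paper: both rewrite $G(u)=0$, use Proposition~\ref{Propriedade-phi} to discard the nonlocal term $\int_{\mathbb{R}^3}(2\omega+\phi_u)\phi_u u^2\,dx\leq 0$, apply the Sobolev embeddings to obtain $\|u\|^2\leq \mu C_1\|u\|^q+C_2\|u\|^6$, and conclude the uniform lower bound on $\|u\|$. Your write-up is in fact slightly more explicit in justifying the sign of the nonlocal term and the final monotonicity step.
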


\begin{proof} Let $u\in \mathcal{N}$, then using the H\"{o}lder inequality
\begin{eqnarray*}
0&=&\|u\|^{2}-2\int_{\mathbb{R}^{3}} \omega\phi_u u^2\,dx -
\int_{\mathbb{R}^{3}} \phi_u^2 u^2\,dx-\mu\|u\|_{q}^{q}-\|u\|_{6}^{6}\\
 & \geq & \|u\|^{2}-\mu C_1 \|u\|^{q}-C_2\|u\|^{6}
\end{eqnarray*}
\noindent and so, there exists $C>0$ such that $\|u\| \geq C$.
\end{proof}

\begin{lemma}\label{nNehari} $\mathcal{N}$ is a $C^1$ manifold.
\end{lemma}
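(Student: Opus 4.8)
The plan is to exhibit $\mathcal{N}$ as a regular level set of the constraint $G$ appearing in (\ref{Nehari}). Since $I\in C^1(E,\mathbb{R})$ and the solution map $\Phi$ is of class $C^1$, the function $G(u)=\langle I'(u),u\rangle$ is $C^1$ on the open set $E\setminus\{0\}$. It therefore suffices to check that $0$ is a regular value of $G$, i.e. that $G'(u)\neq 0$ for every $u\in\mathcal{N}$; the implicit function theorem then identifies $\mathcal{N}=G^{-1}(0)\cap(E\setminus\{0\})$ with a $C^1$ submanifold of codimension one. To exclude $G'(u)=0$ I would prove the stronger fact that $\langle G'(u),u\rangle<0$ on $\mathcal{N}$, a quantity that will also be useful in the subsequent minimization.

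The computation of $\langle G'(u),u\rangle$ amounts to differentiating $s\mapsto G(su)$ at $s=1$. The local terms produce $2\|u\|^2$, $-\mu q\|u\|_q^q$ and $-6\|u\|_6^6$, while the coupling term $A(u)=\int_{\mathbb{R}^3}(2\omega+\phi_u)\phi_u u^2\,dx$ must be differentiated through $\phi_u$. This is exactly where Lemma \ref{2.2a} is used: from $\Phi'(u)u=2\psi_u$ the chain rule gives
\begin{eqnarray*}
\frac{d}{ds}A(su)\Big|_{s=1}=2A(u)+4\int_{\mathbb{R}^3}(\omega+\phi_u)\psi_u u^2\,dx,
\end{eqnarray*}
and testing the equation $\Delta\psi_u-u^2\psi_u=(\omega+\phi_u)u^2$ of Lemma \ref{2.2a} against $\psi_u$ rewrites the last integral as $-\|\psi_u\|_{\mathcal{D}^{1,2}}^2-\int_{\mathbb{R}^3}\psi_u^2u^2\,dx$.

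Substituting the Nehari identity $G(u)=0$, which reads $\|u\|^2-A(u)=\mu\|u\|_q^q+\|u\|_6^6$, these relations collapse $\langle G'(u),u\rangle$ to
\begin{eqnarray*}
\langle G'(u),u\rangle=(2-q)\mu\|u\|_q^q-4\|u\|_6^6+R(u),\qquad R(u)=4\Big(\|\psi_u\|_{\mathcal{D}^{1,2}}^2+\int_{\mathbb{R}^3}\psi_u^2u^2\,dx\Big).
\end{eqnarray*}
The first two terms are strictly negative since $q>2$, so the whole argument reduces to dominating the nonnegative remainder $R(u)$. This is the step I expect to be the main obstacle. The idea is to control $R(u)$ through the a priori estimates on $\phi_u$: using $-\omega\le\phi_u\le 0$ from Proposition \ref{Propriedade-phi}, the sign $\psi_u\le 0$ from Lemma \ref{2.2a} and the integral identity stated there, one obtains $R(u)\le 4\|\phi_u\|_{\mathcal{D}^{1,2}}^2$, and then $\|\phi_u\|_{\mathcal{D}^{1,2}}\le C\omega\|u\|_{12/5}^2$ from Remark \ref{remark-phi-u} turns $R(u)$ into an $\omega^2$-weighted term. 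It is precisely here that hypothesis (V2) in the range $2<q<4$ — where the threshold $\frac{2(4-q)}{q-2}\omega^2$ for $V_0$ appears — is needed to absorb this contribution into $(q-2)\mu\|u\|_q^q$, while for $q\ge 4$ no restriction on $V_0$ is required. Combining the resulting estimate with Lemma \ref{norma(u)>=C}, which bounds $\|u\|$ away from $0$ on $\mathcal{N}$, yields $\langle G'(u),u\rangle<0$, hence $G'(u)\neq 0$, and $\mathcal{N}$ is a $C^1$ manifold.
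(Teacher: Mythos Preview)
Your computation of $\langle G'(u),u\rangle$ is correct and coincides with the paper's: both arrive (via Lemma~\ref{2.2a}) at
\[
\langle G'(u),u\rangle=(2-q)\mu\|u\|_q^q-4\|u\|_6^6+R(u),\qquad R(u)=-4\int_{\mathbb{R}^3}(\omega+\phi_u)\psi_u u^2\,dx\ge 0.
\]
The gap is in the last step, where you claim that $R(u)\le 4\|\phi_u\|_{\mathcal D^{1,2}}^2\le C\omega^2\|u\|_{12/5}^4$ can be ``absorbed into $(q-2)\mu\|u\|_q^q$'' by invoking (V2). This cannot work as stated: after you substituted the Nehari identity, the quantity $V_0$ has disappeared from your expression, so condition~(V2) --- which compares $V_0$ with $\omega^2$, not $\omega^2$ with $\mu\|u\|_q^q$ --- has nothing to act on. There is no uniform inequality $C\omega^2\|u\|_{12/5}^4\le (q-2)\mu\|u\|_q^q$ on $\mathcal N$, and Lemma~\ref{norma(u)>=C} gives only a lower bound on $\|u\|$, which goes the wrong way for controlling $R(u)$ from above. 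The same objection applies to your remark that for $q\ge4$ ``no restriction on $V_0$ is required'': in your form this is not visible, since $R(u)\ge 0$ still has to be dominated.

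The paper avoids this by \emph{not} eliminating $\|u\|^2$. Keeping that term (equivalently, resubstituting $\mu\|u\|_q^q=\|u\|^2-A(u)-\|u\|_6^6$) and using the identity of Lemma~\ref{2.2a} once more gives
\[
\langle G'(u),u\rangle\le (2-q)\|u\|^2-\int_{\mathbb{R}^3}\big[2(4-q)\omega+(6-q)\phi_u+4\psi_u\big]\phi_u u^2\,dx.
\]
Now the \emph{pointwise} bounds $-\omega\le\phi_u\le 0$ and $\psi_u\le 0$ (not the integral estimate of Remark~\ref{remark-phi-u}) make the bracket nonnegative when $4\le q<6$, yielding $\langle G'(u),u\rangle\le(2-q)\|u\|^2\le -C$ directly. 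For $2<q<4$ one drops the two sign-favourable pieces and is left with $(2-q)\|u\|^2-2(4-q)\int\omega\phi_u u^2$; then $-\omega\phi_u\le\omega^2$ turns this into $(2-q)\int|\nabla u|^2+\int[(2-q)V_0+2(4-q)\omega^2]u^2$, and it is precisely here that the threshold $V_0>\frac{2(4-q)}{q-2}\omega^2$ in (V2) makes the bracket negative. In short, the missing idea is to retain $V(x)$ in the estimate and use the pointwise control of $\phi_u$ rather than the $\mathcal D^{1,2}$ bound.
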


\begin{proof} Consider
\begin{eqnarray*}2I(u)=\|u\|^2-\int_{\mathbb{R}^{3}}\omega\phi_u u^2\,dx
-\frac{2\mu}{q}\int_{\mathbb{R}^{3}} |u|^q\,dx-\frac{1}{3}\int_{\mathbb{R}^{3}} {u}^{6}\,dx
\end{eqnarray*}
then, for all $u\in E$,
\begin{eqnarray*}G(u)=2I(u)-\int_{\mathbb{R}^{3}}\omega\phi_u u^2\,dx-\int_{\mathbb{R}^{3}}\phi_u^2 u^2 \,dx+
\frac{(2-q)\mu}{q}\int_{\mathbb{R}^{3}}|u|^q\,dx-\frac{2}{3}\int_{\mathbb{R}^{3}} u^6\,dx.
\end{eqnarray*}

Let us prove that there exists $C>0$ such that $\langle G'(u),u\rangle \leq -C$, for all $u\in \mathcal{N}$.

$G$ turns out to be a $C^1$ functional then, using Lemma \ref{2.2a},
\begin{eqnarray*}
\langle G'(u),u\rangle &=& \langle 2I'(u),u\rangle+(2-q)\mu\int_{\mathbb{R}^{3}}
|u|^q\,dx-4\int_{\mathbb{R}^{3}} u^{6}\,dx-4\int_{\mathbb{R}^{3}}(\omega+\phi_u+\psi_u)\phi_u u^2\,dx\\
&=& (2-q)\|u\|^2-(2-q)\int_{\mathbb{R}^{3}}(2\omega+\phi_u)\phi_uu^2\,dx-(2-q)\int_{\mathbb{R}^{3}} u^{6}\,dx+\\
&&-4\int_{\mathbb{R}^{3}} u^{6}\,dx -4\int_{\mathbb{R}^{3}}(\omega+\phi_u+\psi_u)\phi_u u^2\,dx\\
&\leq & (2-q)\|u\|^2-\int_{\mathbb{R}^{3}}[(2-q)(2\omega+\phi_u)+4(\omega+\phi_u+\psi_u)]\phi_uu^2\,dx\\
&=&  (2-q)\|u\|^2-\int_{\mathbb{R}^{3}}[2(4-q)\omega+(6-q)\phi_u+4\psi_u]\phi_uu^2\,dx
\end{eqnarray*}

The case $4\leq q<6$ is trivial. Consider $2< q<4$. Using Lemma \ref{norma(u)>=C}, condition (V2) and Proposition \ref{Propriedade-phi}, we obtain:
\begin{eqnarray*}
\langle G'(u),u \rangle &\leq &  (2-q)\int_{\mathbb{R}^{3}}|\nabla
u|^2\,dx+(2-q)\int_{\mathbb{R}^{3}}V_0u^2\,dx
-2(4-q)\int_{\mathbb{R}^{3}}\omega\phi_u
u^2\,dx\\
&= & (2-q)\int_{\mathbb{R}^{3}}|\nabla
u|^2\,dx+\int_{\mathbb{R}^{3}}[(2-q)V_0-2(4-q)\omega\phi_u]
u^2\,dx\\
&\leq & (2-q)\int_{\mathbb{R}^{3}}|\nabla
u|^2\,dx+\int_{\mathbb{R}^{3}}[(2-q)V_0+2(4-q)\omega^2]
u^2\,dx\\
&\leq & -C.
\end{eqnarray*}
where $C$ is a positive constant.

\end{proof}

\begin{lemma}\label{I|N >= C} $I$ is bounded from below on $\mathcal{N}$ by a positive constant.
\end{lemma}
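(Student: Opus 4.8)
The plan is to use the Nehari constraint to eliminate the indefinite subcritical term. Since $G(u)=0$ for every $u\in\mathcal{N}$, I would write $I(u)=I(u)-\frac{1}{q}G(u)$; the factor $1/q$ is chosen precisely so that the $\|u\|_q^q$ contribution cancels. Using the expressions for $I$ in (\ref{I}) and for $G$ from the definition of $\mathcal{N}$, and collecting terms, one is led to
\begin{eqnarray*}
I(u)=\left(\frac{1}{2}-\frac{1}{q}\right)\|u\|^2 +\left(\frac{2}{q}-\frac{1}{2}\right)\int_{\mathbb{R}^3}\omega\phi_u u^2\,dx +\frac{1}{q}\int_{\mathbb{R}^3}\phi_u^2 u^2\,dx +\left(\frac{1}{q}-\frac{1}{6}\right)\int_{\mathbb{R}^3} u^6\,dx,
\end{eqnarray*}
valid for all $u\in\mathcal{N}$. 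From here I would split into cases according to the position of $q$ relative to $4$.

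In the regime $4\le q<6$ the argument is immediate: the coefficient $\frac{2}{q}-\frac{1}{2}\le 0$ multiplies $\int\omega\phi_u u^2\le 0$ (Proposition \ref{Propriedade-phi}), so that term is nonnegative, and the remaining two integrals are manifestly nonnegative while $\frac{1}{q}-\frac{1}{6}\ge 0$. Hence $I(u)\ge\left(\frac{1}{2}-\frac{1}{q}\right)\|u\|^2$, and Lemma \ref{norma(u)>=C} gives $I(u)\ge\left(\frac{1}{2}-\frac{1}{q}\right)C^2>0$. The delicate case is $2<q<4$, where $\frac{2}{q}-\frac{1}{2}>0$ multiplies the nonpositive quantity $\int\omega\phi_u u^2$, so the coupling term works against us and must be quantitatively controlled. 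Discarding the two nonnegative terms and using the pointwise bound $-\omega\le\phi_u\le 0$ from Proposition \ref{Propriedade-phi}, so that $\omega\phi_u\ge-\omega^2$ and $\int\omega\phi_u u^2\ge-\omega^2\int u^2$, I would obtain
\begin{eqnarray*}
I(u)\ge\left(\frac{1}{2}-\frac{1}{q}\right)\int_{\mathbb{R}^3}|\nabla u|^2\,dx +\int_{\mathbb{R}^3}\left[\left(\frac{1}{2}-\frac{1}{q}\right)V(x)-\left(\frac{2}{q}-\frac{1}{2}\right)\omega^2\right]u^2\,dx.
\end{eqnarray*}

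It remains to see that the bracket is bounded below by a positive constant. Since $V(x)\ge V_0$ and $\frac{1}{2}-\frac{1}{q}>0$, the bracket is at least $\left(\frac{1}{2}-\frac{1}{q}\right)V_0-\left(\frac{2}{q}-\frac{1}{2}\right)\omega^2$, and multiplying by $2q$ this positivity is equivalent to $(q-2)V_0>(4-q)\omega^2$. Condition (V2) gives $(q-2)V_0>2(4-q)\omega^2>(4-q)\omega^2$, so the inequality holds with room to spare. Thus $I(u)\ge a\int|\nabla u|^2\,dx+b\int u^2\,dx$ with $a,b>0$; since $V$ is continuous and $\mathbb{Z}^3$-periodic by (V1) it is bounded above, say by $V_{\max}$, whence $\int u^2\ge V_{\max}^{-1}\int Vu^2$ and $I(u)\ge\min\{a,b/V_{\max}\}\,\|u\|^2$. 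Invoking Lemma \ref{norma(u)>=C} once more yields a positive lower bound. The main obstacle is exactly this $2<q<4$ case, where the sign of the electrostatic coupling is unfavorable and the gap furnished by (V2) is essential.
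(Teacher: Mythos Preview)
Your proof is correct and follows essentially the same approach as the paper: you form $I(u)-\tfrac{1}{q}G(u)$ to obtain the identity (the paper's equation (\ref{I|N}) with the same coefficients written slightly differently), split into the cases $4\le q<6$ and $2<q<4$, and in the latter use the pointwise bound $-\omega\le\phi_u\le 0$ together with (V2) to absorb the bad coupling term, exactly as the paper does. Your only addition is the explicit use of the upper bound $V\le V_{\max}$ (from continuity and periodicity) to pass from $\int u^2$ back to the $E$-norm, which the paper handles implicitly via the stated equivalence of $\|\cdot\|$ with the standard $H^1$-norm.
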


\begin{proof} For any $u\in\mathcal{N}$,
\begin{eqnarray}\label{I|N}
I\big |_{\mathcal{N}}(u)=\frac{q-2}{2q}\|u\|^2+\frac{4-q}{2q}\int_{\mathbb{R}^{3}}
\omega\phi_u u^2\,dx+\frac{1}{q}\int_{\mathbb{R}^{3}}\phi_u^2 u^2\,dx+\frac{6-q}{6q}\int_{\mathbb{R}^{3}} u^{6}\,dx.
\end{eqnarray}

We have to distinguish two cases. If $4\leq q < 6$, then each term in (\ref{I|N}) is positive and we get
\begin{eqnarray*}
I\big |_{\mathcal{N}}(u)\geq \frac{q-2}{2q}\|u\|^2.
\end{eqnarray*}

Otherwise, if $2<q<4$, we use Proposition \ref{Propriedade-phi} and condition (V2) to obtain
\begin{eqnarray*}
I\big |_{\mathcal{N}}(u)&\geq &
\frac{q-2}{2q}\int_{\mathbb{R}^{3}}|\nabla u|^2\,dx+
\frac{q-2}{2q}\int_{\mathbb{R}^{3}}
V(x)u^2\,dx-\frac{4-q}{2q}\int_{\mathbb{R}^{3}}\omega^2 u^2\,dx\\
&\geq & \frac{q-2}{2q}\int_{\mathbb{R}^{3}}|\nabla u|^2\,dx+
\frac{1}{2q}
\int_{\mathbb{R}^{3}}[(q-2)V_0-(4-q)\omega^2]u^2\,dx\\
&\geq & C\|u\|^2.
\end{eqnarray*}

The conclusion follows by Lemma \ref{norma(u)>=C}.
\end{proof}

By the Ekeland Variational Principle, there exists a minimizing sequence $(u_n)\subset\mathcal{N}$, which can be considered a $(PS)_c$ sequence, i.e.,
\begin{eqnarray}\label{I(u_n)->c}I(u_n)\rightarrow c \qquad \mbox{and}
\qquad I'(u_n)\rightarrow 0,
\end{eqnarray}
where $c$ is characterized by
\begin{eqnarray}\label{valorC}\displaystyle c:=\inf_{\gamma\in\Gamma}
\max_{0\leq
t\leq 1}I(\gamma(t))
\end{eqnarray}
\noindent and
\begin{eqnarray*}\Gamma=\{\gamma\in\mathcal{C}([0,1],E)|I(\gamma(0))=0,
I(\gamma(1))<0 \}.
\end{eqnarray*}

\begin{lemma}\label{c} The number $c$ given in (\ref{valorC}) satisfies
\begin{eqnarray} 0<c<\frac{1}{3}S^\frac{3}{2},
\end{eqnarray}
where S is the best Sobolev constant, namely
\begin{eqnarray*}
S:=\inf\limits_{ u\in \mathcal{D}^{1,2}(\mathbb{R}^{3})\atop_{u\neq 0}}\frac{\int_{\mathbb{R}^{3}}|\nabla u|^{2}\,dx}
{\Big( \int_{\mathbb{R}^{3}}{u^{6}} \,dx\Big)^\frac{1}{3}}.
\end{eqnarray*}

\end{lemma}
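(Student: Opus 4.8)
For the strict positivity $c>0$ I would rely on the structure already developed. Each ray $t\mapsto tu$ with $u\in E\setminus\{0\}$ meets $\mathcal N$ at the unique point where $I(\cdot)$ is maximal along the ray, so that $c=\inf_{u\neq0}\max_{t\ge0}I(tu)=\inf_{\mathcal N}I$; combined with Lemma \ref{I|N >= C} this gives $c>0$ at once. Alternatively, the mountain-pass geometry near the origin suffices: since $\phi_u\le0$ on $\{u\ne0\}$ by Proposition \ref{Propriedade-phi}, the nonlocal term $-\tfrac{\omega}{2}\int\phi_u u^2$ is nonnegative, whence $I(u)\ge\tfrac12\|u\|^2-\tfrac{\mu}{q}\|u\|_q^q-\tfrac16\|u\|_6^6$, and the Sobolev embeddings together with $q>2$ produce $\rho,\alpha>0$ with $I\ge\alpha$ on $\{\|u\|=\rho\}$, forcing $c\ge\alpha>0$.

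The substance is the strict upper bound $c<\tfrac13 S^{3/2}$, which I would obtain by the Brézis--Nirenberg comparison with the Aubin--Talenti instantons. Fix $\varphi\in C^\infty_0(\mathbb{R}^3)$ equal to $1$ near the origin and set $u_\varepsilon(x)=\varphi(x)\,\varepsilon^{1/2}(\varepsilon^2+|x|^2)^{-1/2}$, for which the standard estimates give $\|\nabla u_\varepsilon\|_2^2/\|u_\varepsilon\|_6^2=S+O(\varepsilon)$, $\|u_\varepsilon\|_2^2=O(\varepsilon)$ and, for $q>3$, $\|u_\varepsilon\|_q^q\sim C_q\,\varepsilon^{3-q/2}$ with $C_q>0$. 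Since $I(tu_\varepsilon)\to-\infty$ as $t\to\infty$, the path $s\mapsto s\,T u_\varepsilon$ (with $T$ large, $I(Tu_\varepsilon)<0$) lies in $\Gamma$, whence $c\le\max_{t\ge0}I(tu_\varepsilon)$; it therefore suffices to estimate this maximum.

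Writing $g(t)=I(tu_\varepsilon)$, using $\int V u_\varepsilon^2\le\|V\|_\infty\|u_\varepsilon\|_2^2=O(\varepsilon)$ and the bound $0\le-\tfrac{\omega}{2}\int\phi_{tu_\varepsilon}(tu_\varepsilon)^2\le\tfrac{\omega^2}{2}t^2\|u_\varepsilon\|_2^2=O(\varepsilon)t^2$ coming from $0\le-\phi_{tu_\varepsilon}\le\omega$ (Proposition \ref{Propriedade-phi}), I obtain
\begin{equation*}
g(t)\le\Big(\tfrac{t^2}{2}\|\nabla u_\varepsilon\|_2^2-\tfrac{t^6}{6}\|u_\varepsilon\|_6^6\Big)+C\varepsilon\,t^2-\tfrac{\mu}{q}t^q\|u_\varepsilon\|_q^q .
\end{equation*}
The bracketed term has maximum $\tfrac13\big(\|\nabla u_\varepsilon\|_2^2/\|u_\varepsilon\|_6^2\big)^{3/2}=\tfrac13 S^{3/2}+O(\varepsilon)$. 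For fixed $\mu>0$ the two perturbations vanish as $\varepsilon\to0$, so the maximizer $t_\varepsilon$ stays in a fixed compact subset of $(0,\infty)$, and evaluating there yields $\max_{t\ge0}g(t)\le\tfrac13 S^{3/2}+C_1\varepsilon-C_2\mu\|u_\varepsilon\|_q^q$.

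The case distinction now emerges from the order of $\|u_\varepsilon\|_q^q$. If $4<q<6$ then $\|u_\varepsilon\|_q^q\sim C_q\varepsilon^{3-q/2}$ with exponent $3-q/2\in(0,1)$, so this gain strictly dominates the $O(\varepsilon)$ truncation error and the bound drops below $\tfrac13 S^{3/2}$ for $\varepsilon$ small, for \emph{every} $\mu>0$. If $2<q\le4$ the exponent is $\ge1$ and the subcritical gain no longer beats the error uniformly; there I would instead fix $\varepsilon$ (equivalently, take any fixed $u_0\in E\setminus\{0\}$), bound $g(t)\le\tfrac{t^2}{2}A-\tfrac{\mu}{q}t^qB$ with $A,B>0$ fixed, and note that its maximum $\tfrac{q-2}{2q}A\,t_\mu^2$, attained at $t_\mu=(A/\mu B)^{1/(q-2)}\to0$, tends to $0<\tfrac13 S^{3/2}$ as $\mu\to\infty$. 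The delicate point throughout is precisely this critical upper bound: one must pin the $\mu$-independent contributions (the potential and the nonlocal $\phi_u$ term) at the exact order $O(\varepsilon)$ and verify that the subcritical perturbation sits at a strictly lower power of $\varepsilon$ exactly when $q>4$, which is what produces the threshold $q=4$ in the statement.
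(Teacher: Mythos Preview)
Your argument is correct. For the strict upper bound in the range $4<q<6$ you follow essentially the paper's Br\'ezis--Nirenberg route with the truncated Talenti instantons, so there is no real difference there apart from bookkeeping (the paper uses the scaling $u_\varepsilon(x)=C\varepsilon^{1/4}(\varepsilon+|x|^2)^{-1/2}$, which produces an $O(\varepsilon^{1/2})$ error in place of your $O(\varepsilon)$, and it normalises in $L^6$ before estimating, but the comparison $\varepsilon^{3-q/2}$ versus the truncation error is identical after the change $\varepsilon\leftrightarrow\varepsilon^2$).

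The genuine divergence is in the case $2<q\le4$. The paper does \emph{not} freeze a test function and send $\mu\to\infty$; it keeps the Talenti family, carries the full asymptotic expansion through Claim~2, and then \emph{couples} $\mu$ to $\varepsilon$ by choosing $\mu=\varepsilon^{-1/2}$, so that the term $-\mu C\varepsilon^{(4-q)/4}$ (in their scaling) drives the whole expression to $-\infty$. Your alternative---fix any $u_0\in E\setminus\{0\}$, bound $I(tu_0)\le\tfrac{t^2}{2}A-\tfrac{\mu}{q}t^qB$, and observe that the maximum $\tfrac{q-2}{2q}A(A/\mu B)^{2/(q-2)}\to0$---is more elementary and avoids all the delicate integral estimates on $B_{2R}\setminus B_R$; it yields exactly the same qualitative conclusion (``$\mu$ sufficiently large'') with less work. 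What the paper's approach buys, in principle, is a quantitative relation between $\mu$ and the other data via the explicit coupling $\mu=\varepsilon^{-1/2}$, though this is not exploited later. One small point worth making explicit in your write-up is the two-sided control of the maximiser $t_\varepsilon$: the upper bound follows from $g(t_\varepsilon)\ge0$ and the $t^6$-term, while the lower bound comes from $g(t_\varepsilon)\ge g(t^\ast)\to\tfrac13 S^{3/2}>0$ with $t^\ast=(\|\nabla u_\varepsilon\|_2^2/\|u_\varepsilon\|_6^6)^{1/4}$; both are standard, but you use them when you replace $t_\varepsilon^2$ and $t_\varepsilon^q$ by constants.
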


\begin{proof}
This proof uses a technique by Br\'{e}zis and Nirenberg \cite{Brezis-Nirenberg} and some of its variants. For the sake of completeness we give a sketch of
the proof, see \cite{Carriao-Cunha-Miyagaki} and \cite{Olimpio}.

It suffices to show that
\begin{eqnarray}\label{suffices}
\sup_{t\geq 0} I(tv_{0})< \frac{1}{3} S^{\frac{3}{2}}
\end{eqnarray}
for some $v_{0}\in E, v_{0}\neq 0 $.

Indeed, from Proposition \ref{Propriedade-phi} $ii)$, observing that
\begin{eqnarray*}I(tv_{0})\leq  \frac{t^2}{2}\|v_0\|^2+\frac{t^2}{2}\int_{\mathbb{R}^3}\omega^2 v_0^2\,dx-\frac{\mu}{q}t^q
\int_{\mathbb{R}^3}|v_0|^q\,dx-\frac{t^6}{6}\int_{\mathbb{R}^3}v_0^6\,dx
\end{eqnarray*}
we have $\displaystyle\lim_{t\rightarrow +\infty}I(tv_{0})= -\infty$. Hence,
\begin{eqnarray}\label{sup} c=\displaystyle \inf_{\gamma\in\Gamma}
\max_{0\leq
t\leq 1}I(\gamma(t))\leq\sup_{t\geq 0} I(tv_{0})<\frac{1}{3}
S^{\frac{3}{2}}.
\end{eqnarray}

In order to prove (\ref{suffices}) consider $R>0$ fixed and a cut-off function $\varphi\in C_{0}^{\infty}$ such that
\begin{eqnarray*}\varphi|B_{R}=1, \hspace{0.3cm} 0\leq\varphi\leq
1 \hspace{0.1cm}\mbox{in}\hspace{0.1cm} B_{2R}\hspace{0.3cm}
\mbox{and}\hspace{0.3cm} \mbox{supp}\hspace{0.06cm} \varphi\subset
B_{2R}.
\end{eqnarray*}
where $B_R$ is a ball in $\mathbb{R}^3$ centered in zero with radius $R$.

Let $\varepsilon >0$ and define $w_{\varepsilon}:=u_{\varepsilon}\varphi$ where $u_{\varepsilon}(x)=C\varepsilon^{1/4}/
( \varepsilon+|x|^{2} )^{1/2}$ is the well known Talenti's function in dimension $N=3$ (see \cite{Talenti}) and also consider $v_{\varepsilon}\in
C_{0}^{\infty}$ given by
\begin{eqnarray}\label{v-epsilon definicao}
v_{\varepsilon}:=\frac{w_{\varepsilon}}{\|w_{\varepsilon}\|_{L^{6}(B_{2R})}}.
\end{eqnarray}

From the estimates given in \cite{Brezis-Nirenberg} we have, as $\varepsilon\rightarrow 0$,
\begin{eqnarray}\label{X-epsilon}
X_{\varepsilon}:=\|\nabla v_{\varepsilon}\|_{2}^{2}\leq S+O(\varepsilon^{\delta}), \hspace{0.3cm} \mbox{where}\,\, \delta=\frac{1}{2}.
\end{eqnarray}

Since $\lim \limits_{t\rightarrow\infty} I(tv_{\varepsilon})=-\infty \hspace{0.1cm} \forall\varepsilon$, there exists $t_{\varepsilon}\geq 0$ such that
$\sup\limits_{t\geq 0}I(tv_{\varepsilon})=I(t_{\varepsilon}v_{\varepsilon })$ and we may assume without loss of generality that $t_\varepsilon>0$.

\bigskip

\noindent\textbf{Claim 1}. The following estimate holds
\begin{eqnarray}t_{\varepsilon}\leq \Big( \int_{\mathbb{R}^3}|\nabla
v_{\varepsilon}|^{2}\,dx+\int_{B_{2R}}
(V(x)+2\omega^2)v_{\varepsilon}^{2}\,dx \Big)^\frac{1}{4}:=r_{\varepsilon}>0.
\end{eqnarray}

\noindent\textit{Proof of Claim 1}:
Letting $\gamma(t):=I(tv_{\varepsilon})$ and using the Proposition \ref{Propriedade-phi} $ii)$,
\begin{eqnarray*}
\gamma'(t)&=& \langle I'(tv_{\varepsilon}),v_{\varepsilon}\rangle\\
&=& t\int_{\mathbb{R}^3}|\nabla v_\varepsilon|^2\,dx+t\int_{B_{2R}}V(x)v_\varepsilon^2\,dx - t\int_{B_{2R}}2\omega\phi_{tv_{\varepsilon}}
{v_{\varepsilon}}^2\,dx+\\
&&-t\int_{B_{2R}}\phi_{tv_{\varepsilon}}^2 v_{\varepsilon}^{2}\,dx-\mu
t^{q-1}\int_{B_{2R}} |{v_{\varepsilon}|^q}\,dx-t^{5}\\
&\leq & t r_{\varepsilon}^{4}-t^{5}-t\int_{B_{2R}}\phi_{tv_{\varepsilon}}^2 v_{\varepsilon}^{2}\,dx-\mu
t^{q-1}\int_{B_{2R}} |{v_{\varepsilon}|^q}\,dx,
\end{eqnarray*}
which is negative for $t>r_\varepsilon$.

Now, the function of $t$: $\frac{t^{2}}{2}r_{\varepsilon}^{4}-\frac{t^{6}}{6}$ is increasing on $[0,r_{\varepsilon})$, hence using
(\ref{X-epsilon}), H\"{o}lder inequality and Remark \ref{remark-phi-u} we conclude that
\begin{eqnarray*}
I(t_{\varepsilon}v_{\varepsilon})& \leq & \frac{1}{3}\Big(S+O(\varepsilon^{\delta})+\int_{B_{2R}}(V(x)+2\omega^2) v_{\varepsilon}^{2} \,dx \Big)^{3/2} +\\
&&+ Ct_{\varepsilon}^{4}\|v_{\varepsilon}\|^4_\frac{12}{5} - \frac{\mu}{q}t_{\varepsilon}^{q}\int_{B_{2R}}{v_{\varepsilon}^{q}} \,dx.
\end{eqnarray*}

Applying the inequality
\begin{eqnarray*}
(a+b)^{\alpha}\leq a^{\alpha}+\alpha(a+b)^{\alpha-1}b,
\end{eqnarray*}
which is valid for $a,b\geq 0$, $\alpha\geq 1$, we obtain
\begin{eqnarray*}
I(t_{\varepsilon}v_{\varepsilon}) &\leq & \frac{1}{3}S^{\frac{3}{2}}+O(\varepsilon^{\delta}) +C_1\int_{B_{2R}}(V(x)+2\omega^2)v_{\varepsilon}^{2}\,dx+ \\
&& + C_2 C_{\varepsilon}^{\frac{4}{q}}\|v_{\varepsilon}\|^{4}_{\frac{12}{5}}- \mu C_{\varepsilon}\int_{B_{2R}}{v_{\varepsilon}^{q}}\,dx ,
\end{eqnarray*}
where $C_{\varepsilon}=t_{\varepsilon}^q/q\geq C_{0}^q/q>0$.

We contend that

\bigskip

\noindent\textbf{Claim 2}.
\begin{eqnarray}\label{star}
\lim_{\varepsilon\rightarrow 0}\frac{1}{\varepsilon^{\delta}}\Big(C_1 \int_{B_{2R}}((V(x)+2\omega^2) v_{\varepsilon}^{2}-C_2\mu v_{\varepsilon}^{q})\,dx
+C_3\|v_{\varepsilon}\|_{\frac{12}{5}}^{4}  \Big)=-\infty.
\end{eqnarray}

Assuming (\ref{star}) for a while we have
\begin{eqnarray*}
I(t_{\varepsilon}v_{\varepsilon})< \frac{1}{3}S^{\frac{3}{2}}, \hspace{0.3cm} \varepsilon\hspace{0.1cm} \mbox{small}
\end{eqnarray*}
showing (\ref{suffices}) and thus Lemma \ref{c}.

\bigskip

\noindent\textit{Proof of Claim 2}:

As in \cite{Brezis-Nirenberg}, we obtain
\begin{eqnarray*}
\int_{B_{2R}}|w_{\varepsilon}|^{6}\,dx=C \int_{\mathbb{R}^{3}} \frac{1}{(1+|x|^{2})^{3}}\,dx+O(\varepsilon^{\frac{3}{2}})
\end{eqnarray*}
so, in view of (\ref{v-epsilon definicao}), it suffices evaluate (\ref{star}) with $w_{\varepsilon}$ instead of $v_{\varepsilon}$. In order to prove
(\ref{star}) we must show
\begin{eqnarray}\label{star2}
\lim\limits_{\varepsilon\rightarrow 0}\frac{1}{\varepsilon^{\delta}}\Big[ \int_{B_{R}}((V(x)+2\omega^2)w_{\varepsilon}^{2}-\mu
w_{\varepsilon}^{q})\,dx +\Big(\int_{B_{R}}|w_{\varepsilon}|^{\frac{12}{5}}\,dx \Big)^{\frac{5}{3}} \Big]=-\infty
\end{eqnarray}
and also that
\begin{eqnarray}\label{star22}
\frac{1}{\varepsilon^\delta} \Big[\int_{B_{2R} \setminus B_{R}}((V(x)+2\omega^2)v_{\varepsilon}^{2}-\mu
v_{\varepsilon}^{q})\,dx +\Big(\int_{B_{2R}\setminus B_{R}}|v_{\varepsilon}|^{\frac{12}{5}}\,dx \Big)^{\frac{5}{3}}\Big]
\end{eqnarray}
is bounded.

\bigskip

\noindent \textit{Verification of (\ref{star2})}. Let
\begin{eqnarray*}
I_{\varepsilon}:=\frac{1}{\varepsilon^{\delta}}\Big[ \int_{B_{R}}((V(x)+2\omega^2)w_{\varepsilon}^{2}-\mu
w_{\varepsilon}^{q})\,dx +\Big(\int_{B_{R}}|w_{\varepsilon}|^{\frac{12}{5}}\,dx \Big)^{\frac{5}{3}}\Big].
\end{eqnarray*}

At first, using the fact that $V(x)$ is continuous, and hence, $V\in L_{loc}^{\infty}$, we get
\begin{eqnarray*}
I_{\varepsilon}\leq \frac{1}{\varepsilon^{\delta}}\Big[C\|V\|_{L^{\infty}(B_{R})} \int_{B_{R}}(w_{\varepsilon}^{2}-\mu w_{\varepsilon}^{q})\,dx
+\Big(\int_{B_{R}}|w_{\varepsilon}|^{\frac{12}{5}}\,dx \Big)^{\frac{5}{3}}\Big].
\end{eqnarray*}

Now, on $B_{R}$, by changing variables we have
\begin{eqnarray}\label{aqui}
I_{\varepsilon}&\leq & \varepsilon^{1-\delta}\Big[ C_{1} \int_{0}^{\frac{R}{\sqrt{\varepsilon}}}\frac{r^{2}}{1+r^{2}} \,dr -
\mu C_{2}\varepsilon^{\frac{2-q}{4}} \int_{0}^{\frac{R}{\sqrt{\varepsilon}}}\frac{r^{2}}{(1+r^{2})^{\frac{q}{2}} } \,dr \nonumber\\
&+& C_{3}\varepsilon^{\frac{1}{2}}\Big( \int_{0}^{\frac{R}{\sqrt{\varepsilon}}}\frac{r^{2}}{(1+r^{2})^{\frac{6}{5}} } \,dr \Big)^{\frac{5}{3}} \Big],
\end{eqnarray}
where $C_{i}$ are positive constants independent from $\varepsilon$.

By simple computations, one gets
\begin{eqnarray*}
\int_{0}^{\frac{R}{\sqrt{\varepsilon}}} \frac{r^{2}}{(1+r^{2})^{\frac{6}{5}}} \,dr
\leq \int_{0}^{\frac{R}{\sqrt{\varepsilon}}}\frac{r^{2}}{1+r^{2}}\, dr=
\frac{R}{\sqrt{\varepsilon}}-\arctan(\frac{R}{\sqrt{\varepsilon}})
\end{eqnarray*}
then,
\begin{eqnarray*}
I_{\varepsilon}&\leq & C_{1}R-C_{1}\varepsilon^{\frac{1}{2}}\arctan(\frac{R}{\sqrt{\varepsilon}})
-\mu C_{2}\varepsilon^{\frac{4-q}{4}}\int_{0}^{\frac{R}{\sqrt{\varepsilon}}} \frac{r^{2}}{(1+r^{2})^{\frac{q}{2}}} dr +\\
&&+C_{3}\varepsilon\Big(\int_{0}^{\frac{R}{\sqrt{\varepsilon}}} \frac{r^{2}}{(1+r^{2})^{\frac{6}{5}}} dr  \Big)^{\frac{5}{3}}\\
&\leq & C_{1}R-\mu C_{2}\varepsilon^{\frac{4-q}{4}}\int_{0}^{\frac{R}{\sqrt{\varepsilon}}} \frac{r^{2}}{(1+r^{2})^{\frac{q}{2}}}
dr+C_{3}R^{\frac{5}{3}}\varepsilon^{\frac{1}{6}}.
\end{eqnarray*}

We have to distinguish two cases: either $2<q\leq 4$ or $4<q<6$.

The case $4<q<6$ was proved by Cassani \cite{Cassani}. However, we can also show (\ref{star2}) using the last inequality, since the integral
$\textstyle\int_{0}^{\infty}\frac{r^{2}}{(1+r^{2})^{q/2}} \,dr$ converges.

If $2<q\leq 4$ and noting that $\textstyle\int_{0}^{\infty}\frac{r^{2}}{(1+r^{2})^{q/2}} dr\geq \frac{\pi}{4}$ we conclude
\begin{eqnarray*}
I_{\varepsilon}\leq C_{4}-\frac{\pi}{4}\mu C_{2}\varepsilon^{\frac{4-q}{4}}.
\end{eqnarray*}

Finally, choosing $\mu=\varepsilon^{-\frac{1}{2}}$, we infer that $I_{\varepsilon}\rightarrow -\infty$ as $\varepsilon\rightarrow 0$. Hence this proves
(\ref{star2}).

\bigskip

\noindent \textit{Verification of (\ref{star22})}. We have
\begin{eqnarray*}
&&\hspace{-1cm}\frac{1}{\varepsilon^\delta} \Big[\int_{B_{2R}\setminus B_{R}}((V(x)+2\omega^2)v_{\varepsilon}^{2}\,dx
-\mu v_{\varepsilon}^{q})\,dx +\Big(\int_{B_{2R}\setminus B_{R}}v_{\varepsilon}^{12/5}\,dx \Big)^{\frac{5}{3}}\Big]  \\
&\leq & \frac{C_{1}}{\varepsilon^{\delta}}\int_{B_{2R}\setminus B_{R}}\varphi^{2}u_{\varepsilon}^{2}\,dx+
\frac{C_{2}}{\varepsilon^{\delta}}\Big(\int_{B_{2R}\setminus B_{R}}\varphi^{12/5}u_{\varepsilon}^{12/5} \,dx\Big)^{\frac{5}{3}}\\
&\leq & C_{1}\varepsilon \|\varphi\|^{2}_{H^1(B_{2R}\setminus B_{R})} + C_{2}\varepsilon^{2+\delta}\|\varphi^{6/5} \|^{5/3}_{H^1(B_{2R}\setminus B_{R})}
\end{eqnarray*}
where we choose R large such that $u_{\varepsilon}^{2}\leq \varepsilon^{1+\delta}$, $\forall \,|x|\geq R$. Then we conclude that equation (\ref{star22}) is
bounded.

Consequently, the proof of Claim 2 is complete.
\end{proof}

Now we show that the functional $I$ satisfies the structural assumptions of the Mountain Pass Theorem as well as the behavior of the $(PS)$ sequence.

\begin{lemma}[Mountain Pass Geometry]\label{Mountain Pass} The functional $I$ satisfies the following conditions:
\begin{itemize}
 \item [(i)] There exist positive constants $\beta,\rho$ such that
 $I(u)\geq\beta$ for $\|u\|=\rho$.
 \item [(ii)] There exists $u_{1}\in E$ with $\|u_{1}\|>\rho$ such that
 $I(u_{1})< 0$.
\end{itemize}
\end{lemma}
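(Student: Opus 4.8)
The plan is to verify the two geometric conditions separately, exploiting the sign information on $\phi_u$ from Proposition \ref{Propriedade-phi} together with the Sobolev embeddings.

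For part (i) I would start from the expression (\ref{I}) for $I$. The only term whose sign is not immediately clear is the nonlocal one $-\frac{1}{2}\int_{\mathbb{R}^3}\omega\phi_u u^2\,dx$; but since $\omega>0$ and $\phi_u\le 0$ by Proposition \ref{Propriedade-phi}, we have $-\omega\phi_u u^2\ge 0$, so this term is nonnegative and can simply be discarded in a lower bound. This yields
$$I(u)\ge \frac{1}{2}\|u\|^2-\frac{\mu}{q}\|u\|_q^q-\frac{1}{6}\|u\|_6^6.$$
Next I would invoke the continuous embeddings $E\hookrightarrow L^q(\mathbb{R}^3)$ and $E\hookrightarrow L^6(\mathbb{R}^3)$ to write $\|u\|_q^q\le C_q\|u\|^q$ and $\|u\|_6^6\le C_6\|u\|^6$, so that on $\|u\|=\rho$
$$I(u)\ge \frac{1}{2}\rho^2-\frac{\mu C_q}{q}\rho^q-\frac{C_6}{6}\rho^6=\rho^2\Big(\frac12-\frac{\mu C_q}{q}\rho^{q-2}-\frac{C_6}{6}\rho^4\Big).$$
Since $q>2$, the bracketed factor tends to $\frac12$ as $\rho\to 0$; hence there is a sufficiently small $\rho>0$ for which it stays above a positive constant, and setting $\beta$ equal to that lower bound times $\rho^2$ gives $I(u)\ge\beta>0$ whenever $\|u\|=\rho$.

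For part (ii) I would reuse the estimate already recorded in the proof of Lemma \ref{c}: for any fixed $v_0\in E\setminus\{0\}$,
$$I(tv_0)\le \frac{t^2}{2}\|v_0\|^2+\frac{t^2}{2}\int_{\mathbb{R}^3}\omega^2 v_0^2\,dx-\frac{\mu}{q}t^q\int_{\mathbb{R}^3}|v_0|^q\,dx-\frac{t^6}{6}\int_{\mathbb{R}^3}v_0^6\,dx,$$
which itself follows from the two-sided bound $-\omega\le\phi_{tv_0}\le 0$ of Proposition \ref{Propriedade-phi}. As $t\to+\infty$ the term $-\frac{t^6}{6}\int_{\mathbb{R}^3}v_0^6\,dx$ dominates both quadratic terms and the order-$q$ term (because $q<6$), so $I(tv_0)\to-\infty$. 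Therefore, for $t$ large enough one has simultaneously $I(tv_0)<0$ and $\|tv_0\|=t\|v_0\|>\rho$, and it suffices to take $u_1:=tv_0$.

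The only genuinely delicate point is the nonlocal term in part (i): since $\phi_u$ depends nonlinearly on $u$ one cannot scale it out, but the sign estimate $\phi_u\le 0$ from Proposition \ref{Propriedade-phi} bypasses this difficulty entirely and renders the term harmless for the lower bound. Everything else is a routine consequence of the Sobolev inequalities and the superquadratic growth of the remaining terms, so I expect no serious obstacle beyond keeping track of this sign.
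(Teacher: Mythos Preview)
Your proposal is correct and follows essentially the same route as the paper: both parts rely on Proposition \ref{Propriedade-phi} (the sign of $\phi_u$) to control the nonlocal term, then use the Sobolev embeddings for the lower bound in (i) and the dominance of the $t^6$ term as $t\to\infty$ for (ii). The paper's proof is slightly terser in (i) (it does not spell out why the nonlocal term can be dropped), but the argument is identical.
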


\begin{proof} The proof of this lemma can be found in \cite{Carriao-Cunha-Miyagaki}, but we exhibit it here for completeness.

Using the Sobolev embeddings, we have
\begin{eqnarray*}
I(u)\geq C_{1}\|u\|^2-C_{2}\|u\|^q-C_{3}\|u\|^{6},
\end{eqnarray*}
where $C_{1}$, $C_{2}$ and $C_{3}$ are positive constants. Since $q>2$, there exists $\beta,\rho >0$ such that $\inf\limits_{\|u\|=\rho}I(u)>\beta$,
showing $(i)$.

Let $u\in E$, then for $t\geq 0$ and from Proposition (\ref{Propriedade-phi}) we conclude
\begin{eqnarray*}
I(tu)\leq C_{4} t^2\|u\|^2 +\frac{\omega^2}{2}t^2\|u\|_{2}^2-\frac{\mu}{q}t^q\|u\|_{q}^q- \frac{1}{6}t^{6}\|u\|_{6}^{6}.
\end{eqnarray*}

Since $q>2$, there exists $u_{1}\in E$, $u_{1}:=tu$ with $t$ sufficiently large such that $\|u_{1}\|>\rho$ and $I(u_{1})< 0$, proving (ii).
\end{proof}

Now, by using the Ambrosetti-Rabinowitz Mountain Pass Theorem \cite{Ambrosetti-Rabinowitz}, there exists a $(PS)_c$ sequence $(u_n)$ as in
(\ref{I(u_n)->c}).

\begin{lemma}\label{Lemma PSc bounded}
The $(PS)_{c}$ sequence $(u_{n})$ is bounded in $E$.
\end{lemma}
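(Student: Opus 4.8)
The plan is to exploit the standard combination that annihilates the subcritical nonlinearity. Specifically, I would estimate the quantity $I(u_n)-\frac{1}{q}\langle I'(u_n),u_n\rangle$. On the one hand, since $I(u_n)\to c$ and $\|I'(u_n)\|_{E^*}\to 0$ by (\ref{I(u_n)->c}), for $n$ large one has
\begin{eqnarray*}
I(u_n)-\frac{1}{q}\langle I'(u_n),u_n\rangle \leq c+1+\frac{1}{q}\|I'(u_n)\|_{E^*}\|u_n\|=c+1+o(1)\|u_n\|.
\end{eqnarray*}
On the other hand, carrying out exactly the algebraic manipulation performed in Lemma \ref{nNehari} (using the expressions (\ref{I}) and (\ref{I'}) for $I$ and $G=\langle I'(\cdot),\cdot\rangle$), the same left-hand side is identically equal to the expression appearing in (\ref{I|N}), namely
\begin{eqnarray*}
I(u_n)-\frac{1}{q}\langle I'(u_n),u_n\rangle = \frac{q-2}{2q}\|u_n\|^2+\frac{4-q}{2q}\int_{\mathbb{R}^3}\omega\phi_{u_n}u_n^2\,dx+\frac{1}{q}\int_{\mathbb{R}^3}\phi_{u_n}^2 u_n^2\,dx+\frac{6-q}{6q}\int_{\mathbb{R}^3}u_n^6\,dx.
\end{eqnarray*}
Note that the $\mu$-term cancels, which is precisely why this particular combination is chosen; unlike in Lemma \ref{I|N >= C} this identity is purely algebraic and does not require $u_n\in\mathcal{N}$.

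It then remains to show that the right-hand side is coercive in $\|u_n\|$, and here I would split into the two cases already treated in Lemma \ref{I|N >= C}. If $4\leq q<6$, then $\frac{4-q}{2q}\leq 0$ while $\phi_{u_n}\leq 0$ by Proposition \ref{Propriedade-phi}, so every one of the four terms is non-negative and the right-hand side is bounded below by $\frac{q-2}{2q}\|u_n\|^2$. If $2<q<4$, the only term of indefinite sign is $\frac{4-q}{2q}\int\omega\phi_{u_n}u_n^2\,dx$; here I would discard the two manifestly non-negative terms $\frac{1}{q}\int\phi_{u_n}^2 u_n^2\,dx$ and $\frac{6-q}{6q}\int u_n^6\,dx$, use the pointwise bound $-\omega\leq\phi_{u_n}\leq 0$ to write $\omega\phi_{u_n}u_n^2\geq-\omega^2 u_n^2$, and then invoke (V2) exactly as in Lemma \ref{I|N >= C} to get $(q-2)V_0-(4-q)\omega^2>0$. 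Since $\|\cdot\|$ is equivalent to the $H^1$-norm, this yields a lower bound of the form $C\|u_n\|^2$ with $C>0$.

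Combining the two inequalities gives $C\|u_n\|^2\leq c+1+o(1)\|u_n\|$ for $n$ large, from which the boundedness of $(u_n)$ in $E$ follows at once. The only delicate point is the control of the sign-indefinite coupling term $\frac{4-q}{2q}\int\omega\phi_{u_n}u_n^2\,dx$ in the regime $2<q<4$: this is exactly where the quantitative threshold in hypothesis (V2), together with the a priori estimate $-\omega\leq\phi_{u_n}\leq 0$ from Proposition \ref{Propriedade-phi}, is used. It is the same mechanism that guaranteed positivity of $I$ on $\mathcal{N}$ in Lemma \ref{I|N >= C}, and I would expect no further obstruction once that term is absorbed.
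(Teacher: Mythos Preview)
Your proposal is correct and is essentially the paper's own argument: the paper computes $qI(u_n)-\langle I'(u_n),u_n\rangle$ rather than $I(u_n)-\tfrac{1}{q}\langle I'(u_n),u_n\rangle$, obtains the same four-term expression (up to the overall factor $1/q$), and then splits into the cases $4\le q<6$ and $2<q<4$ exactly as you describe, using Proposition \ref{Propriedade-phi} and the threshold in (V2) to absorb the indefinite term $\frac{4-q}{2q}\int\omega\phi_{u_n}u_n^2\,dx$.
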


\begin{proof}
By hypothesis, let $(u_{n})\subset E$ be such that $-\langle I'(u_n),u_n\rangle \leq o_n(1)\|u_{n} \|$ and $|I(u_{n})|\leq M$, for some positive constant
$M$.
Then from (\ref{I}) and (\ref{I'}),
\begin{eqnarray}\label{ps-bounded-inequality}
&&\hspace{-0.5cm} qM + o_n(1)\|u_{n} \| \geq  q I(u_{n})-\langle I'(u_{n}),u_{n}\rangle = \nonumber \\
&&= \Big(\frac{q}{2}-1\Big)\int_{\mathbb{R}^{3}}{\Big( |\nabla u_{n}|^{2}+ V(x)u_{n}^{2}\Big)\,dx } +\Big(2-\frac{q}{2}\Big)
\int_{\mathbb{R}^{3}}{\omega \phi_{u_n}u_{n}^{2}}\,dx+\nonumber\\
&&\hspace{0.5cm}+ \int_{\mathbb{R}^{3}}{\phi_{u_n}^{2}u_{n}^{2}}\,dx
+\Big(1-\frac{q}{6}\Big)\int_{\mathbb{R}^{3}}{u_{n}^{6}}\,dx \nonumber\\
&& \geq \Big(\frac{q-2}{2}\Big)\int_{\mathbb{R}^{3}}{\Big( |\nabla u_{n}|^{2}+ V(x)u_{n}^{2}\Big) }\,dx
-\omega\Big(\frac{q-4}{2}\Big) \int_{\mathbb{R}^{3}}{ \phi_{u_n}u_{n}^{2}}\,dx.
\end{eqnarray}

As in Lemma \ref{I|N >= C}, there are two cases to be considered: either $2<q<4$ or $4\leq q < 6$.

If $4\leq q < 6$, then by Proposition \ref{Propriedade-phi} and inequality (\ref{ps-bounded-inequality})
\begin{eqnarray*}
qM + o_n(1)\|u_{n} \| &\geq & C\|u_{n} \|^{2}+ \omega\Big(\frac{q-4}{2}\Big)\int_{\mathbb{R}^{3}}{(-\phi_{u_n}) u_{n}^{2} }\,dx\\
&\geq& C\|u_{n} \|^{2}
\end{eqnarray*}
and we deduce that $(u_{n} )$ is bounded in $E$.

But if $2<q<4$, then from (\ref{ps-bounded-inequality}), Proposition \ref{Propriedade-phi} and condition (V2) we get
\begin{eqnarray*}
qM + o_n(1)\|u_{n} \| &\geq &
\Big(\frac{q-2}{2}\Big)\int_{\mathbb{R}^{3}}{ |\nabla u_{n}|^{2}}\,dx+ \Big(\frac{(q-2)V_{0}+(q-4)\omega^{2}}{2}
\Big)\int_{\mathbb{R}^{3}}{u_{n}^{2}}\,dx\\
&\geq& C\|u_{n} \|^{2},
\end{eqnarray*}
which again implies that $(u_{n} )$ is bounded in $E$.
\end{proof}

\begin{lemma}\label{lema de compacidade}
There exist $C>0$, $r>0$ and $\xi\in\mathbb{R}^3$ such that
\begin{eqnarray*}\int_{B_{r}(\xi)}u_n^2 \,dx\geq C,
\end{eqnarray*}
where $(u_n)\subset\mathcal{N}$ is a minimizing sequence.
\end{lemma}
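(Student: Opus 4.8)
The plan is to argue by contradiction using Lions' vanishing principle. Suppose the conclusion fails; then for every $r>0$ one has
\[
\lim_{n\to\infty}\ \sup_{\xi\in\mathbb{R}^3}\int_{B_r(\xi)}u_n^2\,dx=0.
\]
Since $(u_n)$ is bounded in $E$, hence in $H^1(\mathbb{R}^3)$, by Lemma \ref{Lemma PSc bounded}, Lions' lemma then forces $u_n\to 0$ strongly in $L^s(\mathbb{R}^3)$ for every $s\in(2,6)$. In particular $\|u_n\|_q\to0$, $\|u_n\|_3\to0$ and $\|u_n\|_{12/5}\to0$.

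Next I would check that the nonlocal terms carrying $\phi_{u_n}$ are negligible. By Remark \ref{remark-phi-u} and H\"older's inequality, $\int_{\mathbb{R}^3}\omega|\phi_{u_n}|u_n^2\,dx\leq\omega\|\phi_{u_n}\|_6\|u_n\|_{12/5}^2\leq C\|u_n\|_{12/5}^4\to0$, while $\int_{\mathbb{R}^3}\phi_{u_n}^2u_n^2\,dx\leq\|\phi_{u_n}\|_6^2\|u_n\|_3^2\to0$. Since $u_n\in\mathcal{N}$ we have $G(u_n)=0$, that is
\[
\|u_n\|^2=\int_{\mathbb{R}^3}(2\omega+\phi_{u_n})\phi_{u_n}u_n^2\,dx+\mu\int_{\mathbb{R}^3}|u_n|^q\,dx+\int_{\mathbb{R}^3}u_n^6\,dx,
\]
so that, combining these limits with $\|u_n\|_q^q\to0$, I get $\|u_n\|^2-\int_{\mathbb{R}^3}u_n^6\,dx\to0$.

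Passing to a subsequence I set $\ell:=\lim_n\|u_n\|^2=\lim_n\int_{\mathbb{R}^3}u_n^6\,dx$. By Lemma \ref{norma(u)>=C} we have $\ell\geq C^2>0$, and from the definition of $S$ together with $\|\nabla u_n\|_2^2\leq\|u_n\|^2$ one obtains $\int_{\mathbb{R}^3}u_n^6\,dx\leq S^{-3}\|u_n\|^6$; letting $n\to\infty$ yields $\ell\leq S^{-3}\ell^3$, hence $\ell\geq S^{3/2}$. On the other hand, feeding the same limits into the expression (\ref{I}) for $I$, the terms $\int\omega\phi_{u_n}u_n^2$ and $\int|u_n|^q$ drop out, so
\[
c=\lim_{n}I(u_n)=\frac12\ell-\frac16\ell=\frac13\ell\geq\frac13S^{3/2},
\]
which contradicts Lemma \ref{c}. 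Therefore vanishing cannot occur.

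Consequently there exist $r,C>0$ and points $\xi_n\in\mathbb{R}^3$ with $\int_{B_r(\xi_n)}u_n^2\,dx\geq C$. To obtain a fixed concentration center I would invoke the periodicity hypothesis (V1): choosing $k_n\in\mathbb{Z}^3$ so that $\xi_n-k_n$ remains in a fixed bounded set and replacing $u_n$ by $u_n(\cdot+k_n)$ — a substitution that leaves $\|u_n\|$, $G(u_n)$ and $I(u_n)$ unchanged, since $V$ is $\mathbb{Z}^3$-periodic and the remaining terms are translation invariant — the translated minimizing sequence concentrates in a fixed ball $B_r(\xi)$, which is the claimed statement. The main obstacle is the careful bookkeeping that the $\phi_{u_n}$ contributions are genuinely lower order under vanishing and that the translated sequence stays admissible on $\mathcal{N}$; once these are secured, the Sobolev inequality together with the strict gap $c<\tfrac13S^{3/2}$ closes the argument.
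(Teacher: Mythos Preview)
Your proof is correct and follows essentially the same contradiction argument as the paper: assume vanishing, invoke Lions' lemma to kill the subcritical terms, use the Nehari condition $G(u_n)=0$ to identify $\lim\|u_n\|^2=\lim\int u_n^6=\ell$, and then derive $c=\tfrac13\ell\geq\tfrac13 S^{3/2}$ in contradiction with Lemma~\ref{c}. The only differences are cosmetic --- the paper bounds the nonlocal piece in a single line via $-\int(2\omega+\phi_{u_n})\phi_{u_n}u_n^2\leq -2\omega\int\phi_{u_n}u_n^2\leq C\|u_n\|_{12/5}^2$ instead of splitting it, and it stops at the existence of centers $\xi_n$; your closing translation paragraph is not part of the paper's proof of this lemma (that recentering is done separately afterward) and, strictly speaking, it proves the bound for a translated sequence rather than for the original $(u_n)$.
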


\begin{proof} Let $(u_n)$ be a minimizing sequence in $\mathcal{N}$. Suppose by contradiction that there exists $ \bar{r}>0 $ such that
\begin{eqnarray*}
\lim_n\sup_{}\int_{B_{\bar{r}}(\xi)}u_n^2\,dx=0.
\end{eqnarray*}

Using Lemma I.1 of  \cite{Lions1} and the previous lemma, it follows that, for $2<q<6$,
\begin{eqnarray*}
\int_{\mathbb{R}^3}{|u_n|^q} \,dx \rightarrow 0, \quad n\rightarrow\infty.
\end{eqnarray*}

Next we claim that
\begin{eqnarray}\label{claim}
\|u_n\|^2=\int_{\mathbb{R}^3} u_n^{6}\,dx + o_n(1).
\end{eqnarray}

Indeed, noting that
\begin{eqnarray*}
\langle I'(u_n),u_n\rangle =
\| u_n\|^2-\int_{\mathbb{R}^{3}}(2\omega+\phi_{u_n})\phi_{u_n} u_n^2\,dx -\mu\int_{\mathbb{R}^{3}} {|u_n|^q}\,dx -\int_{\mathbb{R}^{3}} u_n^{6}\,dx
\end{eqnarray*}
and from (\ref{system2-modificado2}), we infer
\begin{eqnarray*}
-\int_{\mathbb{R}^{3}}(2\omega+\phi_{u_n})\phi_{u_n} u_n^2\,dx
&\leq &  -2\int_{\mathbb{R}^{3}}\omega\phi_{u_n} u_n^2 \,dx
\leq C\|\phi_{u_n}\|_{\mathcal{D}^{1,2}} \|{u_n}\|_{ \frac{12}{5}}^2\\
& \leq & C\|u_n\|_{ \frac{12}{5}}^2 ,
\end{eqnarray*}
which converges to zero as $n\rightarrow \infty$. Then (\ref{claim}) holds.

Assume $\|u_n\|^2 \rightarrow \ell >0$, as $n\rightarrow\infty$. Since $I(u_n)\rightarrow c $,
\begin{eqnarray*}
\frac{1}{2}\|u_n\|^2-\frac{1}{6}\int_{\mathbb{R}^{3}} u_n^{6}\,dx\rightarrow c
\end{eqnarray*}
hence $c=\frac{1}{3}\ell$.

On the other hand, by the definition of $S$, we have
\begin{eqnarray*}
\ell\geq S\ell^{1/3}  \Rightarrow \ell\geq S^{3/2}.
\end{eqnarray*}

But since $c=\frac{1}{3}\ell\geq \frac{1}{3}S^{3/2}$, we have a contradiction. Therefore, $\|u_n\|^2\rightarrow 0$, which is in contradiction with
Lemma \ref{norma(u)>=C}, then $(u_n)$ does not vanish and Lemma \ref{lema de compacidade} holds.
\end{proof}

\begin{lemma}\label{azz+pomp}
If $u_n\rightharpoonup u_0$ weakly in $E$ then, up to subsequences, $\phi_{u_n}\rightharpoonup \phi_{u_0}$ weakly in $\mathcal{D}^{1,2}$.  As a consequence
$I'(u_n)\rightarrow I'(u_0)$, as $n\rightarrow\infty$.
\end{lemma}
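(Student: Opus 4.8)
The plan is to prove the two assertions in turn: the weak convergence $\phi_{u_n}\rightharpoonup\phi_{u_0}$ is the genuinely new ingredient, and the convergence of $I'$ is then a routine term-by-term passage to the limit that relies on it.

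First I would produce a weak limit for the potentials. Since $u_n\rightharpoonup u_0$ in $E$, the sequence $(u_n)$ is bounded in $E$ and hence, by the embedding $E\hookrightarrow L^{12/5}(\mathbb{R}^3)$, bounded in $L^{12/5}$. Remark \ref{remark-phi-u} then gives $\|\phi_{u_n}\|_{\mathcal{D}^{1,2}}\leq C\omega\|u_n\|_{12/5}^2\leq C'$, so $(\phi_{u_n})$ is bounded in $\mathcal{D}^{1,2}$ and, up to a subsequence, $\phi_{u_n}\rightharpoonup\psi$ weakly in $\mathcal{D}^{1,2}$ for some $\psi$. The decisive point is to identify $\psi=\phi_{u_0}$.

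Next I would pass to the limit in the weak form of the field equation. Fixing $v\in C_0^\infty(\mathbb{R}^3)$ with $\Omega:=\mathrm{supp}\,v$, the definition of $\phi_{u_n}$ reads
\[
-\int_{\mathbb{R}^3}\nabla\phi_{u_n}\cdot\nabla v\,dx=\int_{\Omega}(\omega+\phi_{u_n})u_n^2 v\,dx.
\]
The left-hand side tends to $-\int\nabla\psi\cdot\nabla v\,dx$ by weak convergence. For the right-hand side I would use that on the bounded set $\Omega$ the compact Sobolev embedding yields, along a further subsequence, $u_n\to u_0$ and $\phi_{u_n}\to\psi$ strongly in $L^p(\Omega)$ for every $p<6$ and a.e.; writing the integrand difference against $(\omega+\psi)u_0^2 v$ as $(\phi_{u_n}-\psi)u_n^2 v+\psi(u_n^2-u_0^2)v+\omega(u_n^2-u_0^2)v$ and estimating each piece by H\"older's inequality (for instance the first by $\|\phi_{u_n}-\psi\|_{L^3(\Omega)}\|u_n\|_{L^3(\Omega)}^2\|v\|_\infty$), every term tends to zero. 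Hence $\psi$ solves $\Delta\psi=(\omega+\psi)u_0^2$ weakly, and by the uniqueness part of Proposition \ref{Propriedade-phi} we get $\psi=\phi_{u_0}$. As the limit is independent of the subsequence, the full sequence satisfies $\phi_{u_n}\rightharpoonup\phi_{u_0}$ in $\mathcal{D}^{1,2}$.

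For the second assertion I would show $\langle I'(u_n),v\rangle\to\langle I'(u_0),v\rangle$ for each $v$, inspecting the terms of (\ref{I'}) separately. The linear part converges by weak convergence in $E$. For the critical term, $u_n^5$ is bounded in $L^{6/5}$ and converges a.e. to $u_0^5$, so $u_n^5\rightharpoonup u_0^5$ in $L^{6/5}$ and testing against $v\in L^6$ gives convergence; the subcritical term $|u_n|^{q-2}u_n$ is treated identically in $L^{q/(q-1)}$. The delicate terms are the nonlocal ones $\int 2\omega\phi_{u_n}u_n v\,dx$ and $\int\phi_{u_n}^2 u_n v\,dx$: for $v\in C_0^\infty$ with support $\Omega$, the strong local convergences $u_n\to u_0$ and $\phi_{u_n}\to\phi_{u_0}$ in $L^4(\Omega)$ (from the first part) make $\phi_{u_n}u_n$ and $\phi_{u_n}^2u_n$ converge strongly in $L^2(\Omega)$ and $L^{4/3}(\Omega)$ respectively, so both integrals pass to the limit. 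Since $\|I'(u_n)\|_{E^*}$ is uniformly bounded, a density argument extends the convergence from $C_0^\infty$ to all of $E$, which in particular lets one conclude $I'(u_0)=0$ when combined with $I'(u_n)\to0$. The main obstacle throughout is the passage to the limit in the nonlinear factor $\phi_{u_n}$, which converges only weakly; the device that overcomes it is localization, since testing against compactly supported functions confines everything to bounded domains where the compact embedding upgrades weak to strong $L^p$ convergence and the products pass to the limit.
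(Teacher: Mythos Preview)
Your proof is correct and follows essentially the same route as the paper: bound $(\phi_{u_n})$ in $\mathcal{D}^{1,2}$ via Remark~\ref{remark-phi-u}, extract a weak limit, identify it with $\phi_{u_0}$ by passing to the limit in the weak form of $\Delta\phi=(\omega+\phi)u^2$ against compactly supported test functions (using the local compact embeddings), and then handle the nonlocal terms in $I'$ the same way. Your write-up is in fact slightly more complete than the paper's, since you also treat the subcritical and critical power terms explicitly and spell out the subsequence/density remarks.
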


\begin{proof}
The proof is an easy adaptation of \cite{Azzollini-Pomponio-KGM}, but for the sake of completeness we give a sketch of it.

Let $(u_n)$ and $u_0$ be in E and $u_n\rightharpoonup u_0$ weakly in $E$. Then,
\begin{eqnarray}
&& u_{n}\rightharpoonup u_0 \quad \text{weakly in}\, L^{s}(\mathbb{R}^3),\hspace{.17cm} 2\leq s\leq 6\nonumber\\
 && u_{n}\rightarrow u_0 \quad \text{in}\, L^{s}_{loc}(\mathbb{R}^3),\hspace{.17cm} 2\leq s < 6\label{loc}.
\end{eqnarray}

From Remark \ref{remark-phi-u}, $(\phi_{u_n})$ is bounded in $\mathcal{D}^{1,2}$.
So, there exists $\phi_0\in\mathcal{D}^{1,2}$ such that $\phi_{u_n}\rightharpoonup \phi_{0}$ in $\mathcal{D}^{1,2}$,  as a consequence,
\begin{eqnarray}
&&\phi_{u_n}\rightharpoonup \phi_{0} \quad \text{weakly in}\,\,
L^{6}(\mathbb{R}^3)\nonumber\\
&&\phi_{u_n}\rightarrow\phi_{0}\quad \text{in}\,\,
L^{s}_{loc}(\mathbb{R}^3), \hspace{.17cm} 1\leq s<6 \label{loc2}.
\end{eqnarray}

It remains to show that $\phi_{u_0}=\phi_{0}$. By Proposition (\ref{Propriedade-phi}), it suffices to show that $\phi_0$ satisfies
$\Delta\phi_0=(\omega+\phi_0)u_0^2$.

Let $\varphi\in C_0^{\infty}(\mathbb{R}^3)$ be a test function. Since $\Delta\phi_{u_n}=(\omega+\phi_{u_n})u_n^2$, we have
\begin{eqnarray*}-\int_{\mathbb{R}^3}\langle\nabla\phi_{u_n},
\nabla\varphi\rangle\,dx=\int_{\mathbb{R}^3}\omega\varphi u_n^2\,
dx+\int_{\mathbb{R}^3}\phi_{u_n}\varphi u_n^2.
\end{eqnarray*}

From (\ref{loc}), (\ref{loc2}) and the boundedness of ($\phi_{u_n})$ in $\mathcal{D}^{1,2}$, the following three sentences hold
\begin{eqnarray}\label{A}\begin{array}{rcl}
\displaystyle\int_{\mathbb{R}^3}\langle\nabla\phi_{u_n},\nabla\varphi\rangle\,dx
&\stackrel{ n\rightarrow\infty}{\longrightarrow} &
\displaystyle\int_{\mathbb{R}^3}\langle\nabla\phi_{0},\nabla\varphi\rangle\,dx\\
\displaystyle\int_{\mathbb{R}^3}\phi_{u_n} u_n^2\varphi\,dx
&\stackrel{ n\rightarrow\infty}{\longrightarrow}
\displaystyle & \displaystyle \int_{\mathbb{R}^3}\phi_{0} u_0^2\varphi\,dx\\
\displaystyle\int_{\mathbb{R}^3} u_n^2\varphi\,dx &\stackrel{
n\rightarrow\infty}{\longrightarrow}
&\displaystyle\int_{\mathbb{R}^3}u_0^2\varphi\,dx
\end{array}
\end{eqnarray}
proving that $\phi_{u_0}=\phi_{0}$.

As regards to the second part of the lemma, consider $v\in C_0^{\infty}(\mathbb{R}^3)$ a test function and observe that,
by the boundedness of ($\phi_{u_n})$, (\ref{loc}) and (\ref{loc2}),
\begin{eqnarray*}
\int_{\mathbb{R}^3}(\phi_{u_n}u_n-\phi_{0}u_0)v\, dx &=&
\int_{\mathbb{R}^3}\phi_{u_n}(u_n-u_0)v\, dx +
\int_{\mathbb{R}^3}u_0(\phi_{u_n}-\phi_0)v\, dx\\
&\leq & C\|\phi_{u_n}\|_{\mathcal{D}^{1,2}}\Big(
\int_{\mathbb{R}^3}|u_n-u_0|^{\frac{6}{5}}
|v|^{\frac{6}{5}}\,dx
\Big)^{\frac{5}{6}}+\int_{\mathbb{R}^3}(\phi_{u_n}-\phi_0)u_0v\,dx\\
&=& o_n(1).
\end{eqnarray*}
and
\begin{eqnarray*}
\int_{\mathbb{R}^3}(\phi_{u_n}^2 u_n-\phi_{0}^2 u_0)v\, dx &=&
\int_{\mathbb{R}^3}\phi_{u_n}^2(u_n-u_0)v\, dx +
\int_{\mathbb{R}^3}u_0(\phi_{u_n}^2-\phi_0^2)v\, dx\\
&\leq & C \|\phi_{u_n}\|_{\mathcal{D}^{1,2}}\Big(
\int_{\mathbb{R}^3}|u_n-u_0|^{\frac{3}{2}}|v|^{\frac{3}{2}}\,dx
\Big)^{\frac{2}{3}}+\int_{\mathbb{R}^3}(\phi_{u_n}^2-\phi_0^2)u_0 v\,dx\\
&=& o_n(1).
\end{eqnarray*}

Therefore,
\begin{eqnarray*}\int_{\mathbb{R}^3}(2\omega+\phi_{u_n})\phi_{u_n}u_nv\,dx-\int_{\mathbb{R}^3}(2\omega+\phi_{0})\phi_{0}u_0v\,dx=o_n(1),
\end{eqnarray*}
for all $ v\in C_0^\infty(\mathbb{R}^3)$. As a consequence, $\langle I'(u_n),v\rangle\longrightarrow\langle I'(u_0),v\rangle $ as $n\rightarrow\infty$.
\end{proof}

Consider
\begin{eqnarray}\label{alpha}
\alpha=\inf_{u\in\mathcal{N}}I(u).
\end{eqnarray}

We are going to prove that there exists $u_0\in\mathcal{N}$ with $I(u_0)=\alpha$, that is, $(u_0,\phi_{u_0})$ is a ground state solution of
(\ref{kgm}) system.

Assume $u_n\in\mathcal{N}$ such that $I(u_n)\rightarrow\alpha$, as $n\rightarrow\infty$.

From Lemma \ref{lema de compacidade}, there exist $C>0$, $r>0$ and a sequence $(\xi_n)\subset \mathbb{R}^3$ (we may assume without loss of generality that
$(\xi_n)\subset \mathbb{Z}^3$) such that
\begin{eqnarray*}\int_{B_{r+1}(\xi_n)}u_n^2\,dx \geq C>0.
\end{eqnarray*}

Define $v_n(x):=u_n(x-\xi_n)$. Since $V$ is 1-periodic and $\phi_{u_n}(x-\xi_n)=\phi_{v_n}(x)$, then
\begin{eqnarray*}
\|v_n\|=\|u_n\|, \quad  I(v_n)=I(u_n) \quad \mbox{and}\quad
I(v_n)\rightarrow\alpha,\,\, \mbox{as}\,\, n\rightarrow\infty.
\end{eqnarray*}

Moreover, from the boundedness of $(u_n)$ in $E$, $(v_n)$ is also bounded, from which we conclude that
\begin{eqnarray}\label{este}
\begin{array}{ll}
v_{n}\rightharpoonup v_0 & \mbox{weakly in}\,\, E,\\
v_{n}\rightarrow v_0, & \mbox{in}\,\, L_{loc}^{s}(\mathbb{R}^3), \,\, 1\leq s<6,\\
v_n\rightarrow v_0, &  \mbox{a.e. in} \,\,\mathbb{R}^3.
\end{array}
\end{eqnarray}

Now, in view of Lemma \ref{azz+pomp}, $\phi_{u_n}\rightharpoonup \phi_{0}$ in $\mathcal{D}^{1,2}$, then
\begin{eqnarray}\label{aqui2}\begin{array}{ll}
\phi_{u_n}\rightarrow \phi_0, & \mbox{in}\,\, L_{loc}^{s}(\mathbb{R}^3), \,\, 1\leq s<6,\\
\phi_{u_n}\rightarrow \phi_0, &  \mbox{a.e. in} \,\,\mathbb{R}^3.
\end{array}
\end{eqnarray}

Without loss of generality, we can assume that $(v_n)$ is a Palais-Smale sequence for the functional $I|_{\mathcal{N}}$, in particular,
\begin{eqnarray}\label{aqui*}
\begin{array}{ll}
I(v_n)\rightarrow\alpha, & \mbox{as}\,\, n\rightarrow \infty\\
(I|_{\mathcal{N}})'(v_n)\rightarrow 0, & \mbox{as}\,\,n\rightarrow\infty
\end{array}
\end{eqnarray}
then, for suitable Lagrange multipliers $\lambda_n$ we get
\begin{eqnarray*}
o_n(1)=\langle(I|_{\mathcal{N}})'(v_n),v_n\rangle= \langle I'(v_n),v_n\rangle+\lambda_n\langle G'(v_n),v_n\rangle=\lambda_n \langle G'(v_n),v_n\rangle.
\end{eqnarray*}

From Lemma \ref{nNehari}, we deduce that $\lambda_n=o_n(1)$ and by (\ref{aqui*}),
\begin{eqnarray*}
I'(v_n)\rightarrow 0, \,\,\mbox{as}\,\, n\rightarrow\infty.
\end{eqnarray*}

Using Lemma \ref{azz+pomp} and the last statement, we get $I'(v_0)=0$, where $v_0\neq 0$. Now we have to prove that $I(v_0)=\alpha$. But since
$I(v_n)\rightarrow\alpha$, it suffices to show that $I(v_n)\rightarrow I(v_0)$.

Since $v_n\in\mathcal{N}$, we have
\begin{eqnarray*}
I(v_n)=\frac{q-2}{2q}\|v_n\|+\frac{4-q}{2q} \int_{\mathbb{R}^{3}}\omega\phi_{u_n} v_n^2\,dx
+\frac{1}{q}\int_{\mathbb{R}^{3}}\phi_{u_n}^2 v_n^2\,dx +\frac{6-q}{6q}\int_{\mathbb{R}^{3}} v_n^{6}\,dx.
\end{eqnarray*}

Similarly as it was done by Azzollini and Pomponio in \cite{Azzollini-Pomponio-KGM}, we have to consider two cases: either $2<q<4$ or $4\leq q<6$. If
$4\leq q<6$, then by the weak lower semicontinuity of the $E$-norm, (\ref{este}), (\ref{aqui2}) and Fatou's Lemma we deduce that $I(v_0)\leq\alpha$.

On the other hand if $2<q<4$, using condition (V2) we get
\begin{eqnarray*}
\frac{q-2}{2q}\int_{\mathbb{R}^{3}}(|\nabla v_n|^2+ V(x)v_n^2)\,dx+\frac{4-q}{2q}\int_{\mathbb{R}^{3}}\omega\phi_{u_n} v_n^2 \,dx\geq 0
\end{eqnarray*}
and arguing as before, we again conclude that $I(v_0)\leq\alpha$.

But since $\alpha=\inf_{v\in\mathcal{N}}I(v)$, then $I(v_0)=\alpha$. Consequently, $(v_0,\phi_0)$ is a ground state solution for system (\ref{kgm}).

Using bootstrap arguments and the maximum principle, we can conclude that the solution $v_0$ is positive.

\bigskip

\paragraph{\textbf{Acknowledgements}} The authors would like to thank to Professor S. H. M. Soares for helpful comments.




\bibliographystyle{elsarticle-num}


\end{document}